\documentclass[11pt,reqno]{amsart}

\usepackage[legalpaper, portrait, margin=1.0in]{geometry}

\usepackage{amsmath,amsthm,amssymb,amsfonts,amstext,mathtools,thmtools}
\usepackage{enumerate}
\usepackage{url,hyperref}
\usepackage{graphicx}
\usepackage{xcolor}
\usepackage{verbatim}
\usepackage{comment}
\usepackage{caption,subcaption}
\usepackage{pgfplots}
    \usetikzlibrary{patterns,trees,arrows,intersections}
    \usepgfplotslibrary{fillbetween}
    \pgfdeclarelayer{pre main}
    \pgfplotsset{compat=newest}
\usepackage{etoolbox}
    \makeatletter
    \patchcmd{\@setaddresses}{\indent}{\noindent}{}{}
    \patchcmd{\@setaddresses}{\indent}{\noindent}{}{}
    \patchcmd{\@setaddresses}{\indent}{\noindent}{}{}
    \patchcmd{\@setaddresses}{\indent}{\noindent}{}{}
    \makeatother

\allowdisplaybreaks

\DeclareMathOperator{\Id}{Id}
\DeclareMathOperator{\slp}{slope}

\newcommand{\R}{\mathbb{R}}

\newcommand{\Z}{\mathbb{Z}}

\newcommand{\mmid}{\,\middle\vert \,}

\newcommand{\slr}{{\rm SL}_2(\R)}
\newcommand{\slz}{{\rm SL}_2(\Z)}

\newcommand{\slxo}{{\rm SL}(X,\omega)}
\renewcommand{\Re}{{\rm Re}}
\renewcommand{\Im}{{\rm Im}}

\newcommand{\bbC}{\mathbb{C}}
\newcommand{\bbR}{\mathbb{R}}
\newcommand{\bbN}{\mathbb{N}}
\newcommand{\bbZ}{\mathbb{Z}}

\newcommand{\bbS}{\mathbb{S}}
\newcommand{\bbG}{\mathbb{G}}

\newcommand{\ath}{\mathrm{arctanh}}

\theoremstyle{remark}

\newtheorem{thm}{Theorem}
\numberwithin{thm}{section}

\newtheorem{lem}[thm]{Lemma}

\theoremstyle{definition}
\newtheorem{defn}[thm]{Definition}

\newenvironment{mythm}[1]
  {\innercustomthm}
  {\endinnercustomthm}

\theoremstyle{remark}

\begin{document}

\title[Slope Gap Distribution of the Double Heptagon]{Slope Gap Distribution of the Double Heptagon and an Algorithm for Determining Winning Vectors}

\author[F.~Al~Assal]{Fernando Al Assal}
\address{Fernando Al Assal\\
Department of Mathematics\\
University of Wisconsin, Madison\\
480 Lincoln Dr.\\
Madison, WI 53706}
\email{\href{alassal@wisc.edu}{alassal@wisc.edu}}

\author[N.~Ali]{Nada Ali}
\address{Nada Ali\\
Department of Mathematics\\
Rice University\\
6100 Main St.\\
Houston, TX 77005}
\email{\href{nada.ali@rice.edu}{nada.ali@rice.edu}}

\author[U.~Arengo]{Uma Arengo}
\address{Uma Arengo\\
Department of Mathematics \\
Yale University\\
219 Prospect St.\\
New Haven, CT 06520}
\email{\href{mailto:uma.arengo@yale.edu}{uma.arengo@yale.edu}}

\author[T.~McAdam]{Taylor McAdam}
\address{Taylor McAdam\\
Department of Mathematics \& Statistics 
\\ Pomona College
\\ 333 N. College Way
\\ Claremont, CA 91711}
\email{\href{mailto:taylor.mcadam@pomona.edu}{taylor.mcadam@pomona.edu}}

\author[C.~Newman]{Carson Newman}
\address{Carson Newman\\
Department of Mathematics\\
University of California, Berkeley\\
970 Evans Hall\\
Berkeley, CA 94720}
\email{\href{mailto:carsonnewman@berkeley.edu}{carsonnewman@berkeley.edu}}

\author[N.~Scully]{Noam Scully}
\address{Noam Scully\\
Department of Mathematics \\
Yale University\\
219 Prospect St.\\
New Haven, CT 06520}
\email{\href{mailto:noam.scully@yale.edu}{noam.scully@yale.edu}}

\author[S.~Zhou]{Sophia Zhou}
\address{Sophia Zhou\\
Department of Mathematics \\
Yale University\\
219 Prospect St.\\
New Haven, CT 06520}
\email{\href{mailto:sophia.zhou@yale.edu}{sophia.zhou@yale.edu}}

\begin{abstract} 
In this paper, we study the distribution of renormalized gaps between slopes of saddle connections on translation surfaces. Specifically, we describe a procedure for finding the ``winning holonomy vectors'' as defined in \cite{KSW}, which constitutes a key step in calculating the slope gap distribution for an arbitrary Veech surface. We then apply this method to explicitly compute the gap distribution for the regular double heptagon translation surface. This extends work of \cite{ACL} on the gap distribution for the ``golden L'' translation surface, which is equivalent to the regular double pentagon surface. 
\end{abstract}

\keywords{translation surface, Veech surface, saddle connection, slope gap distribution, horocycle flow, dynamical system, double heptagon, winning vector}
\subjclass[2000]{37D40, 14H55, 37A17, 32G15}

\maketitle

%\tableofcontents

\section{Introduction}

A \emph{translation surface} can be defined as a collection of polygons in the plane with pairs of equal length, parallel sides of opposite orientation glued by translations. Two collections of polygons are considered equivalent (i.e.\ representing the same translation surface) if you can get from one to the other by a series of cut-translate-paste moves. Equivalently, a translation surface can be defined as a compact Riemann surface $X$ together with a holomorphic $1$-form $\omega$ defined on $X$, and we will denote a translation surface by the pair $(X,\omega)$. The resulting surfaces are locally flat except at a finite number of \emph{cone points} where all of the curvature is concentrated. A cone point of total angle $2\pi(k+1)$ for $k\in\bbN$ corresponds to a zero of order $k$ for the holomorphic $1$-form, and by the Riemann-Roch theorem, the sum of the degrees of excess angle for all of the cone points equals $2g-2$, where $g$ is the genus of the surface. The number of cone points and their excess angles naturally partition the space of all translation surfaces into \emph{strata}, where $\mathcal{H}(k_1,\dots,k_n)$ denotes the stratum of translation surfaces with $n$ cone points of angles $2\pi(k_i+1)$ for $i=1,\dots,n$. A \emph{saddle connection} on a translation surface is a geodesic straight-line path that connects a cone point to a cone point (not necessarily distinct) without passing through any cone points in between. 
The group $\slr$ acts naturally on the space of translation surfaces (realized as polygons) by its action on the plane. The \emph{Veech group} of a translation surface $(X,\omega)$ is the stabilizer of the translation surface under this $\slr$-action, denoted $\slxo$, and a surface is called a \emph{Veech surface} or \emph{lattice surface} if its Veech group is a lattice in $\slr$, which can be understood as saying that the translation surface has a large amount of symmetry. 
We refer to \cite{HS, Zorich, WrightSurvey} for more background on the general theory of translation surfaces. 

\subsection{Past Work}

The saddle connections of a translation surface capture important information about the surface's geometry, and a significant amount of past work has been dedicated to understanding the distribution of saddle connections on different translation surfaces. In \cite{Masur88, Masur90}, Masur shows that for any fixed translation surface, there are quadratic upper and lower bounds for the number of saddle connections of length at most $R$ as $R$ tends to infinity. In \cite{Veech89}, Veech shows that there is an exact quadratic asymptotic for the number of saddle connections of length at most $R$ for any Veech surface. In \cite{Veech98}, Veech shows that there is an exact quadratic asymptotic for the number of saddle connections when averaged over all translation surfaces in a given stratum, as well as that the directions of saddle connections in a Veech surface equidistribute in $S^1$ with respect to the Lebesgue measure. Eskin-Masur show in \cite{EskinMasur} that almost every surface in a given stratum has an exact quadratic asymptotic for the growth rate of the number of saddle connections of bounded length, and Vorobets shows in \cite{Vorobets} that the directions of saddle connections equidistribute in $S^1$ for almost every translation surface in a stratum. In \cite{Dozier}, Dozier shows that on any translation surface, the uniform measure taken over saddle connections of length at most $R$ converges to the area measure induced by the flat metric as $R\to \infty$. He also shows that the weak limit of any subsequence of the normalized counting measure given by the angles of saddle connections of length at most $R$ is in the Lebesgue measure class on $S^1$ (and if the limit exists as $R\to\infty$, then it converges to Lebesgue measure).

All of these results point to a certain degree of ``randomness'' or uniformity in the distribution of saddle connections on translation surfaces. However, more recent attention has been given to finer statistics in the distribution of saddle connections, suggesting more complicated and subtle behavior. Athreya and Chaika show in \cite{AthCha} that the distribution of renormalized gaps between saddle connection directions exists and is the same for almost every translation surface in a stratum, and moreover that the distribution of renormalized gaps has no support at zero (i.e.\ ``no small gaps'') if and only if the surface is Veech. Inspired by Marklof-Str\"ombergsson \cite{MarkStrom}, their methods reduce the problem of studying gaps between saddle connections to certain equidistribution problems on homoegeneous spaces, setting the stage for much of the work on gaps that follows. 

In \cite{AC13}, Athreya-Cheung use a similar dynamical approach to study the distribution of Farey sequences. The Farey sequence of level $n$, defined as the ordered list of rational numbers between $0$ and $1$ of denominator at most $n$, can be realized as slopes of saddle connections of bounded horizontal length on the square torus translation surface with a single marked point.
Athreya-Cheung rederive the Hall distribution for the limiting behavior of renormalized gaps between Farey fractions (i.e.\ slopes of saddle connections on the square torus) by realizing the BCZ map of \cite{BCZ} as the first-return map for the horocycle flow to a particular Poincar\'e section in $\slr/\slz$. This distribution, first derived by Hall in \cite{Hall}, displays the ``no small gaps'' property consistent with the results of \cite{AthCha} for Veech surfaces, as well as two points of non-differentiability and a quadratic tail. This demonstrates that the distribution of gaps between saddle connection directions may be far from ``random,'' in the sense that it does not resemble the exponential distribution that would result from a sequence of i.i.d. uniform random variables. The remarkable relationship between slopes of saddle connections and questions of number-theoretic interest (such as the distribution of Farey sequences) serves as one motivation for studying the \emph{slopes} (rather than angles) of saddle connections, which is seen throughout the work that follows. The methods of \cite{AC13} are also more naturally suited to studying slopes rather than angles, for reasons that are made clear in Section~\ref{sec:prelims}.

In \cite{ACL}, Athreya-Chaika-Leli\`evre find the gap distribution for slopes of saddle connections on the ``golden L'' translation surface, which is equivalent to the regular double pentagon surface, and part of the inspiration for the present work on the double heptagon. In \cite{UW}, Uyanik-Work calculate the slope gap distribution for the regular octagon, which is generalized in \cite{Previous_SUMRY}, where the authors find the slope gap distribution for the regular $2n$-gon and provide linear upper and lower bounds on the number of points of non-analyticity for this family of surfaces. 
Uyanik-Work also provide a general algorithm in \cite{UW} for constructing the Poincar\'e section and return time function needed to calculate the gap distribution for any Veech surface. 
This algorithm is modified by Kumanduri-Sanchez-Wang in \cite{KSW} to show that the number of points of non-analyticity is finite and that the tail of the distribution displays quadratic decay for any Veech surface. We use the algorithm of \cite{KSW} as a starting point for our work, and a more detailed description of this method can be found in Section~\ref{sec:KSWalg}.

Other relevant past work on gap distributions includes \cite{Ath13}, in which Athreya proves several meta-theorems about gap distributions in the context of homogeneous dynamics, which in particular apply to studying the slopes and angles of saddle connections on translation surfaces. In \cite{Heersink}, Heersink uses techniques from \cite {FS} to lift the Poincar\'e section of Athreya-Cheung to a quotient of $\slr$ by any finite index subgroup of $\slz$ and uses this to find the gap distributions for certain subsets of Farey fractions. In \cite{Taha1, Taha2}, Taha studies the distribution of generalized Farey fractions by finding a Poincar\'e section for the geodesic and horocycle flows on quotients $\slr$ by Hecke triangle groups. In \cite{Sanchez}, Sanchez computes the gap distribution for the family of surfaces known as doubled slit tori, the first example of an explicit gap distribution for a non-Veech surface. In \cite{Work}, Work finds a Poincar\'e section for the horocycle flow on $\mathcal{H}(2)$ and provides bounds on the return time function, which can be understood as a step toward calculating the ``generic'' distribution described by \cite{AthCha} for a surface in this stratum. More recently, Osman, Southerland, and Wang provide the first effective results for slope gap distributions by showing that estimates converge to the slope gap distribution of a Veech surface with a polynomial rate depending on the temperedness of the Veech group, \cite{OSWEffective}.

\subsection{Main Results}

The algorithms of Uyanik-Work and Kumanduri-Sanchez-Wang systematize much of the work of finding the slope gap distribution for an arbitrary Veech surface. 
However, in both algorithms, there is a key step in the process which is left as a black box---namely, determining the ``winning vectors'' at each point on the Poincar\'e section (for more details and a definition, see Section~\ref{sec:KSWalg} or \cite{KSW}). The ``winning vector'' at a given point determines the return time function at that point, and proving which holonomy vectors win on different regions of the Poincar\'e section is often the most difficult step in deriving the slope gap distribution for a surface. In past work this has remained an \emph{ad hoc} part of the process, unique to each surface studied. In Section~\ref{sec:algorithm}, we describe a general method that greatly simplifies the process of determining the winning vectors for an arbitrary Veech surface.  

We demonstrate the utility of this algorithm by using it to compute the gap distribution for the double heptagon in Section~\ref{sec:dbl_heptagon}. 

\begin{mythm}{\ref{thm:double-heptagon-sgd}}
    The renormalized slope gap distribution for the double heptagon is the piecewise real analytic function with $13$ points of non-analyticity shown in Figure~\ref{fig:7gon_slope_gap_dist-1}.
\begin{figure}[h!]
     \centering
     \includegraphics[width=0.6\linewidth]{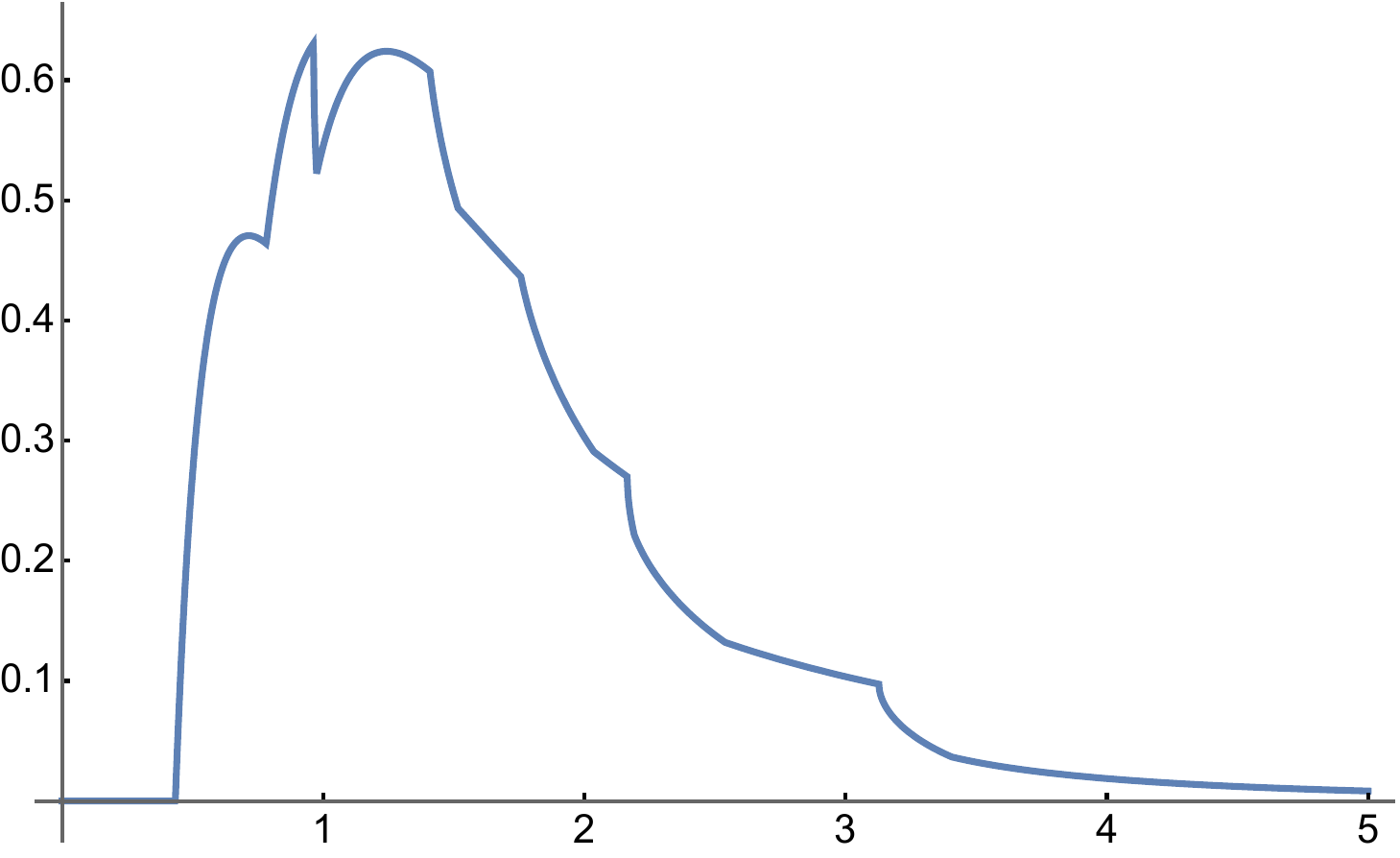}
     \caption{The slope gap distribution of the regular double heptagon translation surface.}

     \label{fig:7gon_slope_gap_dist-1}
\end{figure}
\end{mythm}

This distribution is interesting in its own right, both as an extension of the work of \cite{ACL} for the double pentagon and as an addition to the list of surfaces whose gap distributions have been explicitly computed. However, we believe that our method for determining winning vectors may be of more general interest to the study of slope gap distributions on Veech surfaces.

\subsection{Acknowledgments}

This work originated as part of the 2023 Summer Undergraduate Mathematics Research at Yale (SUMRY) program, and the authors are grateful to Yale University and NSF Award DMS-2050398 for supporting this program which made this work possible. The fourth named author is also grateful for support from the NSF Postdoctoral Fellowship DMS-1903099. 
The authors would like to thank Jordan Grant, Jo O'Harrow, and Hamilton Wan for providing code from past SUMRY projects, as well as Caglar Uyanik for helpful conversations. 
% Will need to eventually thank anonymous referee

\section{Preliminaries}
\label{sec:prelims}

Let $(X,\omega)$ be a translation surface and let $\slxo$ be its Veech group under the action of $\slr$. If $\gamma$ is an oriented saddle connection on $(X,\omega)$, then the \emph{holonomy vector} associated with $\gamma$ is 
\[
v_\gamma = \int_\gamma \omega \ \in \bbC
\]
which records the length and direction of the saddle connection. Denote the set of all holonomy vectors on $(X,\omega)$ by $\Lambda(X,\omega)$, or simply $\Lambda$ if the surface is understood from context. It is known that $\Lambda$ is a discrete subset of $\bbC$. Consider the set of slopes in $[0,1]$ of holonomy vectors in $\Lambda$ in the first quadrant with real part at most $R$:\footnote{In analogy with the treatment of Farey fractions in \cite{AC13} and because of the connection to the horocycle flow, it makes most sense to bound the horizontal component, rather than the absolute length, when dealing with the distribution of slopes. However, in order to obtain a finite set, we also restrict to slopes in $[0,1]$. Though arbitrary, this choice is of little significance, since one sees from the proof that restricting to slopes in any positive-length interval yields the same distribution of gaps. If $(X,\omega)$ is periodic for the horocycle flow, then the set of gaps repeats, so this restriction can be removed. In this case, the derivation of the gap distribution uses the equidistribution of long closed horocycles (see \cite{Ath13}), and one obtains the same distribution using either definition of $\bbS_R(X,\omega)$. Finally, we note that the derivation that follows shows that every surface in the $\slr$-orbit of $(X,\omega)$ has the same gap distribution (see also \cite{KSW}) and that any Veech surface has elements in its orbit which are periodic for the horocycle flow, so if we are solely interested in Veech surfaces, we could instead take this approach. Regardless, we use the given definition because it applies more generally and is common in the literature.}
\[
\bbS_R(X,\omega) := \{\slp(v) \mid v\in\Lambda, \ 0\leq \Im(v)\leq \Re(v) \leq R\}
\]
and let $N(R):=|\bbS_R(X,\omega)|$ be the number of distinct slopes in this set. From now on, we consider $\bbS_R(X,\omega)$ as an ordered set:
\[
\bbS_R(X,\omega) = \{0\leq  s^R_1 < \cdots < s^R_{N(R)}\}.
\] 
We are interested in the distribution of gaps between elements in this set as $R\to\infty$, however, it follows from \cite{Veech98, Vorobets} that (for almost every translation surface and for all Veech surfaces) the set of slopes becomes dense in $\R$ as $R\to\infty$, so the absolute gap size goes to $0$. Since we know from \cite {Masur88,Masur90} 
that $N(R)$ grows quadratically, it makes sense to instead consider the \emph{renormalized} gap set
\begin{align*}
\bbG_R(X,\omega) := \left\{R^2(s^R_{i+1} - s^R_i)  \mmid s_i\in \bbS_R(X,\omega), \ i=1,\dots,N(R)-1\right\}.
\end{align*}
As before, we will simply denote $\bbS_R(X,\omega)$ by $\bbS_R$ and $\bbG_R(X,\omega)$ by $\bbG_R$ when the surface is clear from context. We are interested in finding a probability density function $f:\R\to[0,\infty)$ such that
\[
\lim_{R\to\infty}\frac{\left|\bbG_R(X,\omega)\cap [a,b]\right|}{N(R)-1} = \int_a^b f(x) dx
\]
for any $[a,b]\subseteq \bbR$. To do this, we will adopt the method of \cite{AthCha,AC13}, which transforms the problem of studying slope gaps into a problem about the dynamics of the horocycle flow on the homogeneous space $\slr/\slxo$.

\subsection{Dynamical Reframing}

Define the \emph{horocycle flow} to be the action of the one-parameter subgroup 
\[
H = \left\{ h_s = \begin{pmatrix} 1&0\\-s&1\end{pmatrix} \mmid s\in\R \right\}
\]
which acts as a downward vertical shear on the plane. 
Observe that $h_s$ acts by translation on slopes, namely
\[
\slp(h_s v) = \slp(v) - s.
\]
In particular, this means that it preserves gaps between slopes and that the slope of a vector is precisely the time it takes that vector to hit the $x$-axis under the horocycle flow. The horocycle flow acting on the space of translation surfaces naturally restricts to the $\slr$-orbit of $(X,\omega)$, which can be identified with the moduli space $\slr/\slxo\cong\slr\cdot (X,\omega)$, and in this context $H$ acts on $\slr/\slxo$ by left multiplication. 

Recall that a \emph{Poincar\'e section} or \emph{transversal} for a flow is a lower-dimensional subset of the space that almost every orbit of the flow passes through infinitely often in a discrete subset of times. In \cite{Ath13}, Athreya shows that the subset of surfaces with a short horizontal saddle connection of length less than or equal to one, given by
\[
\Omega := \{g\slxo \mid \Lambda(g\cdot(X,\omega))\cap(0,1]\neq\emptyset\},
\]
is a Poincair\'e section for the horocycle flow on $\slr/\slxo$, where we think of $(0,1]\in\bbC$ as a subset of the real axis. 
Define the \emph{first return time function} $\mathcal{R}:\Omega\to\bbR^+$ to be the time it takes a point $x\in \Omega$ to return to the set $\Omega$ under the horocycle flow, i.e.,
\[
\mathcal{R}(x) = \min\{s>0\mid h_s\cdot x\in\Omega\}.
\]
Define the \emph{return map} $T:\Omega\to\Omega$ to be the next intersection point of $x\in\Omega$ with the Poincar\'e section under the horocycle flow:
\[
T(x) = h_{\mathcal{R}(x)} \cdot x.
\]

Now observe that the diagonal matrix\footnote{This is an unusual parametrization of the geodesic flow, which would typically be parametrized $a_t=\left(\begin{smallmatrix}e^{t/2}&0\\0&e^{-t/2}\end{smallmatrix}\right)$, but this parametrization is more convenient for our purposes.}
\[
a_{R} := \begin{pmatrix}R^{-1}&0\\0&R\end{pmatrix}\in\slr
\]
contracts the plane horizontally by a factor of $R^{-1}$ and stretches the plane vertically by a factor of $R$, so it scales slopes by a factor of $R^2$, that is
\[
\slp(a_R v) = R^2\slp(v)
\]
for any $v\in\bbC$. Then 
\begin{align*}
R^2 \bbS_R(X,\omega) &=\{0\leq R^2s^R_1<\cdots<R^2s^R_{N(R)}\}\\
%&= \{R^2\slp(v) \mid v\in\Lambda(X,\omega), \ 0\leq \Im(v)\leq \Re(v) \leq R\}\\
&= \left\{\slp(a_Rv) \mmid v\in\Lambda(X,\omega), \ 0\leq \Im(v)\leq \Re(v)\leq R\right\}\\
&= \left\{\slp(v) \mmid v\in a_R\Lambda(X,\omega), \ 0\leq \Im(v)\leq R^2\Re(v)\leq R^2\right\}\\
&= \left\{\slp(v) \mmid v\in \Lambda(a_R\cdot (X,\omega)), \ 0\leq \Im(v)\leq R^2\Re(v)\leq R^2\right\}\\
&= \left\{s \mmid h_{s}\cdot a_R\slxo\in\Omega, \ s\in [0,R^2] \right\}.
\end{align*}
In other words, the set of slopes in $\bbS_R(X,\omega)$ multiplied by $R^2$ is the same as the times at which the surface $a_R\cdot(X,\omega)$ intersects the transversal $\Omega$ under the horocycle flow when flowed up to time $R^2$. Thus the renormalized gaps are precisely the times between returns of $a_R\slxo$ to $\Omega$, i.e.,
\begin{align*}
\bbG_R(X,\omega) 
&= \{\mathcal{R}(T^{i-1}(x_0^R)) \mid i=1,\dots,N(R)-1\}
\end{align*}
where $x_0^R:= h_{R^2s_1^R}\cdot a_R\slxo =  a_R\cdot h_{s_1^R}\slxo$ is the first point at which the horocycle orbit of $a_R\slxo$ intersects $\Omega$. This allows us to write our desired distribution as a limit of Birkhoff sums of the form
\begin{align*}
\lim_{R\to\infty} \frac{|\bbG_R(X,\omega)\cap[a,b]|}{N(R)-1} 
&= \lim_{R\to\infty} \frac{1}{N(R)-1}\sum_{i=1}^{N(R)-1} \chi_{\mathcal{R}^{-1}([a,b])}(T^{i-1}(x_0^R))
\end{align*}
where
\[
\chi_{\mathcal{R}^{-1}([a,b])}(x) = \begin{cases}
    1,&\mathcal{R}(x)\in[a,b]\\
    0,&\mathcal{R}(x)\notin[a,b]
\end{cases}
\]
is the indicator function on $\mathcal{R}^{-1}([a,b])\subseteq \Omega$. The following lemma states that this limit in fact converges to the unique invariant probability measure for the return map on $\Omega$, which will allow us to find the probability density function $f$. Similar statements can be found in \cite{AC13,ACL,Ath13}, but we include the proof for the sake of completeness. We also remark that the following statement and proof do not differentiate between points that are periodic versus non-periodic for the horocycle flow, nor whether the surface $(X,\omega)$ has a horizontal saddle connection.
\begin{lem}
Suppose $(X,\omega)$ is a Veech surface with $\bbS_R$, $N(R)$, $\Omega$, $T$, $\mathcal{R}$, and $x_0^R$ defined as above. Then for any bounded, measurable function $f:\Omega\to\bbR$,
\[
\lim_{R\to\infty} \frac{1}{N(R)-1} \sum_{i=1}^{N(R)-1} f(T^{i-1}(x^R_0)) = \int_{\Omega} f dm.
\]
where $m$ is the unique ergodic probability measure for $T$ supported on $\Omega$.
\end{lem}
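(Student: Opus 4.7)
The strategy is the standard ``unfolding'' approach of \cite{AthCha, AC13, Ath13}, combining three ingredients: (i) the unique ergodicity of the horocycle flow $H$ on $\slr/\slxo$ with respect to Haar probability measure $\mu$, which is available since $\slxo$ is a lattice (Dani--Smillie); (ii) the equidistribution of the specific horocycle arcs $\{h_s a_R\slxo : s \in[0,R^2]\}$ as $R\to\infty$; and (iii) the Kac-type cross-section formula realizing $(\Omega, T, m)$ as a suspension base for $H$.

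The first step is to establish (ii) via the matrix identity $h_s a_R = a_R h_{s/R^2}$, which rewrites the orbit as the geodesic pushforward $a_R \cdot \{h_u\slxo : u\in[0,1]\}$ of a fixed horocycle arc. By the mixing of the geodesic flow on $\slr/\slxo$ (equivalently, Sarnak's theorem on the equidistribution of expanding horocycle translates), for every $F \in C_c(\slr/\slxo)$,
\[
\frac{1}{R^2}\int_0^{R^2} F(h_s a_R\slxo)\, ds \longrightarrow \int_{\slr/\slxo} F\, d\mu \qquad \text{as } R\to\infty.
\]
Ingredient (iii), established for this $\Omega$ in \cite{Ath13}, yields a $T$-invariant Borel probability measure $m$ on $\Omega$, unique and ergodic by the unique ergodicity of $H$, such that for bounded measurable $F$,
\[
\int_{\slr/\slxo} F\, d\mu = \frac{1}{Z}\int_\Omega \int_0^{\mathcal{R}(x)} F(h_s x)\, ds\, dm(x), \qquad Z := \int_\Omega \mathcal{R}\, dm.
\]

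To connect the Birkhoff sum to the orbit integral, for small $\delta > 0$ introduce the thickening $F_\delta(g) := \delta^{-1} f(x)\,\mathbf{1}_{[0,\delta)}(s)$, where $g = h_s x$ with $x\in\Omega$ and $s\in[0,\mathcal{R}(x))$. A direct computation gives
\[
\int_0^{R^2} F_\delta(h_s a_R\slxo)\, ds = \sum_{i=1}^{N(R)-1} f\bigl(T^{i-1}(x_0^R)\bigr) + O(\|f\|_\infty),
\]
provided $\delta$ is smaller than the return times encountered along the orbit; the $O(\|f\|_\infty)$ error absorbs the missed final return and any partial thickening at time $R^2$. Applying (ii) to $F_\delta$, dividing by $N(R)-1$, and using the asymptotic $N(R)/R^2 \to Z^{-1}$ (proved by the same argument with $f \equiv 1$) yields
\[
\frac{1}{N(R)-1}\sum_{i=1}^{N(R)-1} f\bigl(T^{i-1}(x_0^R)\bigr) \longrightarrow Z\int_{\slr/\slxo} F_\delta\, d\mu.
\]
Sending $\delta \to 0$ on the right, the cross-section formula gives $Z \cdot Z^{-1}\int_\Omega f\, dm = \int_\Omega f\, dm$, as desired.

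The main obstacle is applying (ii) to the discontinuous $F_\delta$: one must exploit that $\partial \Omega$ is a $\mu$-null finite union of smooth codimension-one pieces (parametrized by saddle connections of unit horizontal component) in order to sandwich $F_\delta$ between continuous functions whose integrals agree in the limit. The reduction from bounded measurable $f$ to continuous $f$ on $\Omega$ is a routine $L^1(\Omega, m)$-approximation once the continuous case is in hand.
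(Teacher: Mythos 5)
Your proposal is correct and follows essentially the same strategy as the paper: expanding-horocycle equidistribution (Kleinbock--Margulis / Sarnak) combined with the suspension/cross-section formula for $(\Omega, T, m)$, and the conjugation identity $a_R h_s a_R^{-1} = h_{R^2 s}$ to convert the spread-out horocycle arc into the orbit of $a_R\slxo$. The packaging differs slightly: you plug a thickened test function $F_\delta$ into the equidistribution statement to read off the Birkhoff sum directly, whereas the paper keeps a general test function on $\slr/\slxo$, splits the resulting $[0,R^2]$-integral at the successive return times, and concludes with weak convergence of the empirical measures $\sigma_R = \frac{1}{N(R)-1}\sum \delta_{T^{i-1}(x_0^R)}$ to $m$. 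These are two dressings of the same argument and there is no meaningful gain or loss in either direction.

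Two points you should tighten. First, your identity $\int_0^{R^2} F_\delta(h_s a_R\slxo)\,ds = \sum_i f(T^{i-1}(x_0^R)) + O(\|f\|_\infty)$ requires a $\delta>0$ that lies below every return time encountered --- otherwise some visits contribute $f\cdot\mathcal{R}/\delta$ rather than $f$, and the error is no longer $O(1)$. This is exactly the ``no small gaps'' phenomenon for Veech surfaces: in the paper's parametrization $\mathcal{R}(a,b) = \frac{y}{b(bx-ay)} \geq y \geq y_0 > 0$, so one may fix $\delta < y_0$ once and for all and the fiber integral of $F_\delta$ equals $f$ exactly, with no $\delta\to0$ limit needed. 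You should state this explicitly, since for non-Veech surfaces this fails and the argument would need repair. Second, your invocation of ``unique ergodicity of $H$ on $\slr/\slxo$'' is not quite right for non-uniform lattices, which Veech groups are: periodic horocycles carry exotic invariant measures. What you actually need is the equidistribution of expanding horocycle translates (which you also cite), and that is the correct and sufficient ingredient --- you can safely drop the unique-ergodicity framing. The concern you raise about applying equidistribution to the discontinuous $F_\delta$ and the jump from continuous to merely measurable $f$ is real, but the paper is loose on the same point (weak convergence of $\sigma_R$ only gives continuous test functions a priori); in both treatments the resolution is that the discontinuity sets in the actual applications are $m$-null polygonal boundaries, so the portmanteau/Riemann-sandwich argument you sketch is the standard repair.
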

\begin{proof}
Let $\Gamma = \slxo$. We first note that $\slr/\Gamma$ is a suspension space over $\Omega$ with roof function given by $\mathcal{R}$
and that the probability Haar measure $\mu$ on $\slr/\Gamma$ decomposes as $d\mu = C ds dm$ for some constant $C$, so for any integrable $f:\slr/\Gamma\to\bbR$, we have 
\begin{align}
\int fd\mu 
&= C\int_\Omega\int_0^{\mathcal{R}(x)} f(h_sx) ds dm(x). \label{eq:mudecomp}
\end{align}
It follows from results of Kleinbock-Margulis (see, e.g., Proposition 2.2.1 in \cite{KMbdd}) that expanding translates of pieces of horocycles equiduistribute in $\slr/\Gamma$. In particular, 
\begin{align}
\lim_{R\to\infty} \int_0^1 f(a_R h_s\Gamma)ds &= \int fd\mu. \label{eq:expandingtranslates}
\end{align}
 But we may also write
\begin{align*}
 &\int_0^1 f(a_R h_s\Gamma)ds\\
 &=  \int_0^{1} f(a_Rh_s a_R^{-1}  a_R\Gamma)ds\\
 &=\frac{1}{R^2} \int_0^{R^2} f(h_s  a_R\Gamma)ds\\
 &= \frac{1}{R^2} \left(\int_0^{R^2s_1^R} f(h_s  a_R\Gamma)ds+\int_{R^2s_1^R}^{R^2s_2^R}f(h_s  a_R\Gamma)ds+\cdots +\int_{R^2s_{N(R)}^R}^{R^2}f(h_s  a_R\Gamma)ds\right) \\
 &= \frac{1}{R^2} \left(\int_0^{R^2s_1^R} f(h_s a_R\Gamma)ds+\sum_{i=1}^{N(R)-1} \int_0^{\mathcal{R}(T^{i-1}(x_0^R))}f(h_sT^{i-1}(x_0^R))ds+\int_{R^2s_{N(R)}^R}^{R^2}f(h_s  a_R\Gamma)ds\right).
\end{align*}
Now note that 
\begin{align*}
    \left|\frac{1}{R^2}\int_0^{R^2s_1^R} f(h_s  a_R\Gamma)ds\right|\leq \frac{R^2s_1^R}{R^2}\ \|f\|_\infty=s_1^R\|f\|_\infty\to 0
\end{align*}
as $R\to\infty$, where $\|f\|_\infty$ is the sup-norm of $f$. This is because the slopes in $\bbS_R$ become dense in $[0,1]$ as $R\to\infty$, so $s_1^R\to 0$. Similarly,
\begin{align*}
    \left|\frac{1}{R^2}\int_{R^2s_{N(R)}^R}^{R^2} f(h_s  a_R\Gamma)ds\right|\leq (1-s_{N(R)}^R)\|f\|_\infty\to 0
\end{align*}
as $R\to\infty$, since $s_{N(R)}^R\to 1$. Thus
\begin{align*}
\lim_{R\to\infty} \int_0^1 f(a_R h_s\Gamma)ds = \lim_{R\to\infty} \frac{N(R)-1}{R^2}\left(\frac{1}{N(R)-1} \sum_{i=1}^{N(R)-1} \int_0^{\mathcal{R}(T^{i-1}(x_0^R))}f(h_sT^{i-1}(x_0^R))ds\right).
\end{align*}
We know that $\frac{N(R)-1}{R^2}\to C$ by \cite{Veech89}. Let $\sigma_R$ be the normalized counting measure on the intersections of the orbit of $a_R\Gamma$ with $\Omega$ under the horocycle flow when flowed up to time $R^2$, i.e.
\[
\sigma_R = \frac{1}{N(R)-1} \sum_{i=1}^{N(R)-1} \delta_{T^{i-1}(x_0^R)}.
\]
Then this says that
\begin{align*}
\lim_{R\to\infty} \int_0^1 f(a_R h_s\Gamma)ds = \lim_{R\to\infty}  C\int_\Omega\int_0^{\mathcal{R}(x)}f(h_s x)dsd\sigma_R(x).
\end{align*}
Together with \ref{eq:mudecomp} and \ref{eq:expandingtranslates}, this implies the weak convergence $\sigma_R \to m$, which finishes the proof.
\end{proof}
This lemma means that we can represent our gap distribution by
\begin{align*}
    \lim_{R\to\infty} \frac{|\bbG_R(X,\omega)\cap[a,b]|}{N(R)-1} = m(\{x\in\Omega \mid \mathcal{R}(x)\in [a,b]\}).
\end{align*}
So in order to calculate the distribution function from a practical standpoint, we need to find a convenient way to parametrize the Poincar\'e section $\Omega$, understand the measure $m$ with respect to this parametrization, and determine the return time function $\mathcal{R}$ at every point on $\Omega$. Luckily, the first two tasks are already well understood in the literature, and we now provide an overview of those constructions.

\subsection{The Kumanduri-Sanchez-Wang Algorithm}
\label{sec:KSWalg}

We build on work of Kumanduri, Sanchez, and Wang in \cite{KSW} parametrizing the transversal $\Omega$ for an arbitrary Veech surface, which itself modifies the algorithm developed by Uyanik and Work in \cite{UW}. 
In this section, we introduce their notation and summarize their results, but a more thorough treatment can be found in the original papers.

Let $(X,\omega)$ be a Veech surface and suppose its Veech group $\slxo$ has $n$ cusps, corresponding to the conjugacy classes of maximal parabolic subgroups of $\slxo$. Let $\Gamma_1,\dots,\Gamma_n$ be representatives of these $n$ conjugacy classes. 
Each of these parabolic subgroups gives rise to a different component of the parametrization of $\Omega$, but exactly what these pieces look like depends on whether $-\Id\in\slxo$.

\subsubsection{Case 1}
First, assume $-\Id\in\slxo$. Then $\Gamma_i\cong \bbZ\oplus\bbZ/2\bbZ$ for each $i\in\{1,\dots,n\}$, and for each $i$ we can choose a parabolic generator $P_i$ for the infinite cyclic factor of $\Gamma_i$ that has eigenvalue $1$. Let $w_i$ be a shortest holonomy vector in the eigenspace of $P_i$. Then the set of all holonomy vectors for $(X,\omega)$ decomposes into a finite union of $\slxo$-orbits of these vectors, i.e. 
\[
\Lambda= \bigcup_{i=1}^n \slxo w_i.
\]
 After possibly replacing $P_i$ with its inverse, it is possible to find a matrix $C_i\in\slr$ with the properties
 \begin{itemize}
     \item $C_iP_iC_i^{-1} = 
     \begin{pmatrix}1&\alpha_i\\0&1\end{pmatrix}$ for some $\alpha_i>0$, and
     \item $C_iw_i = \begin{pmatrix}1\\0\end{pmatrix}$, i.e., $C_i\cdot (X,\omega)$ has shortest horizontal saddle connection of length $1$.
 \end{itemize}
For each $i$, let $(x^i_{0},y^i_{0})$ be the holonomy vector for $C_i(X,\omega)$ satisfying the following two properties in order:
    \begin{itemize}
        \item $y^i_0 > 0$ is the smallest positive $y$-component among all holonomy vectors on $C_i(X,\omega)$, and
        \item $x^i_0 > 0$ is the smallest positive $x$-component among all holonomy vectors on $C_i(X,\omega)$ with $y$-component equal to $y^i_0$.
    \end{itemize}
Define 
\[
M_{a,b} = \begin{pmatrix}a&b\\0&a^{-1}\end{pmatrix}.
\]
Then $\Omega = \bigsqcup_{i=1}^n \Omega_i$, where
\[
\Omega_i = \left\{M_{a,b}C_i\cdot \slxo \mmid (a,b)\in T_i \right\}
\]
and where $T_i\subseteq\bbR^2$ is the triangle 
\[
T_i = \left\{(a,b)\in\bbR^2 \mmid 0 < a \leq 1, \; \left(-\frac{x^i_0}{y^i_0} - \alpha_i \right) a + \frac{1}{y^i_0} < b \leq -\frac{x^i_0}{y^i_0} a + \frac{1}{y^i_0}
    \right\}.
\]
Thus, $T_i$ parametrizes the space $\Omega_i$, and the unique invariant ergodic probability measure $dm$ on $\Omega$ is then just the appropriately scaled Lebesgue measure $dadb$ on $\sqcup_{i=1}^n T_i$ under this parametrization. We will often abuse notation and write $\Omega_i$ for either $\Omega_i\subseteq \slr/\slxo$ or $T_i\subseteq \bbR^2$, depending on the context.

Let $\Lambda_i=C_i\cdot \Lambda$ denote the set of holonomy vectors on $C_i\cdot(X,\omega)$. For a point $M_{a,b}C_i\cdot\slxo\in\Omega_i$, its return time will be given by the slope of the first holonomy vector on $M_{a,b}C_i\cdot(X,\omega)$ to hit the $x$-axis in the interval $(0,1]$ when flowed forward by the horocycle flow. 
If $v=(x,y)\in \Lambda_i$ is the holonomy vector on $C_i\cdot(X,\omega)$ such that $M_{a,b}v$ is this 
vector, then in the coordinates of $T_i$, the return time function is given by
\[
\mathcal{R}(a,b) = \slp(M_{a,b}v) = \frac{1}{a(ax+by)}.
\]
Thus, in order to calculate the slope gap distribution for $(X,\omega)$, we need to determine this vector for every $(a,b)\in\Omega$. The following definitions are taken from \cite{KSW}.
\begin{defn}
    A vector $v\in \Lambda_i$ is the \emph{winning vector} or \emph{winner} at $(a,b) \in \Omega_i$ if $M_{a,b}v$ is the first holonomy vector on $M_{a,b}C_i\cdot(X,\omega)$ to intersect the $x$-axis in the interval $(0,1]$ when flowed forward by the horocycle flow. Equivalently, $M_{a,b}v$ is the holonomy vector of least positive slope on $M_{a,b}C_i\cdot(X,\omega)$ with $x$-component in $(0,1]$. If there are multiple vectors of least slope, we choose the shortest one as the winning vector.
\end{defn}
\begin{defn}
    A vector $(x,y)\in\Lambda_i$ is a \emph{candidate winning vector} at $(a,b) \in \Omega_i$ if 
    it satisfies $0 < ax + by \leq 1$ and $y > 0$.
\end{defn}
This means that $M_{a,b}\cdot(x,y) = (ax+by,a^{-1}y)$ has positive $y$-coordinate and $x$-coordinate in $(0,1]$, so it will eventually intersect the $x$-axis in the interval $(0,1]$ when flowed forward by the horocycle flow. It will be the winning vector at $(a,b)$ if it is the first such vector to do so.

\begin{defn}
\label{def:KSWcandidacystrips}
    Given a holonomy vector $(x,y)\in\Lambda_i$, define the strip $S_{\Omega_i}(x,y)$ to be the set of points in the plane of $\Omega_i$ where $(x,y)$ could be a candidate winning vector. That is,
    \[
    S_{\Omega_i}(x,y) = \{ (a,b)\in\bbR^2 \; \vert \; 0 < ax + by \leq 1,\; a>0 \}.
    \]
    Similarly, define $S_{\Lambda_i}(a,b)$ to be the strip in the plane of  $\Lambda_i$ containing holonomy vectors for $C_i\cdot (X,\omega)$ which could possibly win at $(a,b)\in\Omega_i$, or
    \[
    S_{\Lambda_i}(a,b) = \{ (x,y)\in\bbR^2 \; \vert \; 0 < ax + by \leq 1,\; y>0 \}.
    \]
\end{defn}
Note that $S_{\Lambda_i}(a,b)$ includes vectors that are not holonomy vectors, and hence are not candidates. Being in $S_{\Lambda_i}(a,b)$ is a necessary but not sufficient condition to be a candidate winning vector at $(a,b)$. Likewise, $S_{\Omega_i}(x,y)$ includes points outside of $\Omega_i$, even though candidate winning vectors are not actually defined for these points. 

Finally, we observe the following fact:
\begin{lem}
    The matrix $M_{a,b}$ with $a>0$ preserves the order of slopes. That is, $\slp(v_1)\leq\slp(v_2)$ if and only if $\slp(M_{a,b}v_1) \leq \slp(M_{a,b}v_2)$, where we consider an undefined slope to be larger than any positive slope and smaller than any negative slope.\footnote{Notice that this means a negative slope is considered larger than a positive slope. In all cases, the ordering on slopes can be succinctly defined by the relation: $(x_1,y_1)$ has smaller slope than $(x_2,y_2)$ if and only if $\frac{x_1}{y_1}<\frac{x_2}{y_2}$.}
\end{lem}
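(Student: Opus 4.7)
The plan is to reduce the statement to the elementary observation that $M_{a,b}$ acts by a strictly increasing affine transformation on the quantity $x/y$, and then invoke the footnote's characterization of slope order through this quantity.

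First I would just compute the action directly. For any vector $v=(x,y)$,
\[
M_{a,b}(x,y) = (ax+by,\; a^{-1}y),
\]
so the coordinate ratio transforms as
\[
\frac{x}{y} \;\longmapsto\; \frac{ax+by}{a^{-1}y} \;=\; a^{2}\cdot\frac{x}{y} + ab.
\]
This is the affine map $t\mapsto a^2 t + ab$, which is strictly increasing on $\R$ because $a^{2}>0$ (using $a>0$). By the succinct characterization in the footnote, the slope order on vectors is exactly the order on their $x/y$ values, so applying this monotone affine map to both $x_1/y_1$ and $x_2/y_2$ preserves the inequality in both directions. Hence $\slp(v_1)\le\slp(v_2)$ if and only if $\slp(M_{a,b}v_1)\le\slp(M_{a,b}v_2)$, as required.

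The only point that needs a separate check is the degenerate case where some $y_i=0$ (so the formal ratio $x_i/y_i$ is $\pm\infty$). Here the new second coordinate is $a^{-1}y_i=0$, so horizontal vectors stay horizontal; and since $a>0$, the sign of the new first coordinate $ax_i+by_i=ax_i$ agrees with the sign of $x_i$. Thus the ``extremal'' cases $x_i/y_i=+\infty$ (horizontal rightward) and $x_i/y_i=-\infty$ (horizontal leftward) are each preserved by $M_{a,b}$, and the extended order on $x/y$-values is still respected. I do not expect any real obstacle in the proof: it is essentially the one-line calculation above, with the only subtlety being the careful use of the footnote's convention to avoid the apparent discontinuity one sees when trying to work directly with the slope $y/x$ (whose induced Möbius action on $\R$ would be monotone only away from a pole, not globally).
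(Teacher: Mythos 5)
Your proof is correct and in effect carries out the ``algebraic exercise'' that the paper's own proof explicitly leaves to the reader. The paper argues geometrically, factoring $M_{a,b}$ into a positive diagonal scaling composed with a horizontal shear and noting that each factor preserves the order of directions; you instead compute directly that the coordinate $x/y$ transforms under $M_{a,b}$ by the affine map $t \mapsto a^2 t + ab$, which is globally strictly increasing since $a>0$. The two routes encode the same underlying fact, but your version is fully explicit and pinpoints why $x/y$ (rather than $y/x$) is the right parameter: the induced affine map has no pole, whereas the induced M\"obius action on $y/x$ does, which is exactly the source of the lemma's ordering convention. Your separate check of the horizontal case $y=0$ is a detail the paper elides. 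One small caveat worth flagging: the inequality in the paper's footnote appears to be printed with the wrong sense (in the proof of Lemma~\ref{lem:A_winners_are_Omega_winners}, ``the slope of $(u,v)$ must be no greater than that of $(x,y)$'' is translated as $\frac{u}{v}\geq\frac{x}{y}$, so smaller slope corresponds to \emph{larger} $x/y$ when $y>0$). This does not affect your conclusion, since a strictly increasing map respects both an order and its reverse, but it is worth not taking the footnote entirely at face value.
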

\begin{proof}
Geometrically, the linear transformation $M_{a,b}$ consists of scaling the plane horizontally and vertically by positive quantities, which does not change the order of slopes of vectors, as well as a horizontal shear, which also preserves the order of slopes. It is an exercise to verify this fact algebraically.
\end{proof}
This means that the winning vector at $(a,b)\in\Omega_i$ is simply the shortest candidate winning vector at $(a,b)$ of least slope. 

\subsubsection{Case 2}
Now assume $-\Id\notin\slxo$. This case is similar to Case 1, except now $\Gamma_i\cong \bbZ$ for all $i$, and for each parabolic subgroup, its generator $P_i$ either has eigenvalue $1$ or $-1$. Suppose that $P_1,\dots,P_k$ have eigenvalue $1$ and $P_{k+1},\dots, P_n$ have eigenvalue $-1$, where $1\leq k\leq n$. In this case, the set of holonomy vectors for $(X,\omega)$ decomposes as
\[
\Lambda = \bigcup_{i=1}^n \left(\slxo w_i\cup \slxo (-w_i)\right)
\]
where $w_i$ is a shortest holonomy vector in the eigenspace of $P_i$. 
Then (after possibly replacing $P_i$ with its inverse) for each $i\in\{1,\dots,k\}$ there is a $C_i\in\slr$ with the same properties as in Case 1, and for $i\in\{k+1,\dots,n\}$, there is $C_i\in\slr$ such that 
\[
C_iP_iC_i^{-1} = \begin{pmatrix}-1&\alpha_i\\0&-1\end{pmatrix}
\]
for some $\alpha_1>0$, and such that $C_i\cdot (X,\omega)$ has shortest horizontal saddle connection of length $1$. 
Define $(x_0^i,y_0^i)\in C_i\cdot\Lambda$ as before. 
Then $\Omega = \bigsqcup_{i=1}^n \Omega_i$, where
\[
\Omega_i = \left\{M_{a,b}C_i\cdot \slxo \mmid (a,b)\in T_i \right\},
\]
but now the set $T_i$ depends on whether $P_i$ has eigenvalue $1$ or $-1$. If $i\in\{1,\dots,k\}$, then 
\[
T_i = T_{i,1}\cup T_{i,2}
\]
where $T_{i,1}$ is defined in the same way as $T_i$ from Case 1, and $T_{i,2} = \{(-a,-b)\mid (a,b)\in T_{i,1}\}$. If $i\in\{k+1,\dots,n\}$, then 
\[
T_i = \left\{(a,b)\in\bbR^2 \mmid 0 < a \leq 1, \; \left(-\frac{x^i_0}{y^i_0} - 2\alpha_i \right) a + \frac{1}{y^i_0} < b \leq -\frac{x^i_0}{y^i_0} a + \frac{1}{y^i_0}
    \right\}.
\]
In both cases, the measure $m$ is still the scaled Lebesgue measure under this parametrization. We define winning vectors and candidate winning vectors in the same way as before, and the formula for $\mathcal{R}$ in coordinates is also the same. We note that if $v$ wins at $(a,b)\in T_{i,1}$ for $i\in\{1,\dots,k\}$, then $-v$ wins at $(-a,-b)\in T_{i,2}$. So in all cases, the problem of determining the winning vectors reduces to finding the winning vectors on the triangle
\begin{align}
\left\{(a,b)\in\bbR^2 \mmid 0 < a \leq 1, \; \left(-\frac{x^i_0}{y^i_0} - n\alpha_i \right) a + \frac{1}{y^i_0} < b \leq -\frac{x^i_0}{y^i_0} a + \frac{1}{y^i_0}
    \right\}\subseteq\bbR^2
    \label{eq:KSWtriangle}
\end{align}
for $n=1$ or $2$.

\subsubsection{Computing the Probability Density Function}
\label{sec:finding-pdf}

In \cite{KSW}, the authors show that the winning vectors partition each transversal component into a finite set of polygonal regions. Once the transversal has been parameterized and the return time function has been determined at each point in $\Omega$, finding the cumulative distribution function $F$ amounts to finding the area within each polygon swept out by level sets of the hyperbola that defines the return time function on that region. Explicitly,
\begin{align*}
    F(t) &= \sum_{i=1}^n m(\{(a,b)\in T_i \mid 0<\mathcal{R}(a,b)\leq t\})\\
    &=\sum_{i=1}^n \sum_{j=1}^{k_i} m(\{(a,b)\in \Delta_i(v_j) \mid 0<\mathcal{R}_j(a,b)\leq t\})
\end{align*}
where $v_1,\dots, v_{k_i}$ are the winning vectors on $\Omega_i$, $\Delta_i(v_j)$ is the polygonal subset of $T_i$ on which $v_j$ wins, and $\mathcal{R}_j$ is the return time function determined by $v_j$. The probability density function $f$ is then simply given by $f(t) = F'(t)$. The measure $m$ is the Lebesgue measure normalized so that $\int_\R f dm = 1$.

\section{Finding Winning Vectors}\label{sec:winners}

Let $(X,\omega)$ be a Veech surface. Assume we know $(x^i_0,y^i_0)$, $P_i$, $C_i$, and $\alpha_i$ for that surface, as defined in Section~\ref{sec:KSWalg}. From now on, we will focus on a single value of $i$, so we suppress the index for the remainder of this document. 

\subsection{Reparameterization}

We will find it convenient to work with the following reparameterization of the transversal:
\begin{defn}
    Define $\Omega$ as the following triangle in $\bbR^2$:
    $$\Omega = \left\{ (a,b) \in \bbR^2 \mmid 0 < b \leq 1, \; \frac{x_0}{y_0} b - \frac{1}{y_0} \leq a < \; \right(\frac{x_0}{y_0} + n\alpha\left) b -\frac{1}{y_0} \right\},$$
    where $n=1$ or $2$, depending on whether 
    the parabolic generator $P$ corresponding to this piece of the transversal has eigenvalue $1$ or $-1$ (cf. Section~\ref{sec:KSWalg}).
\end{defn}

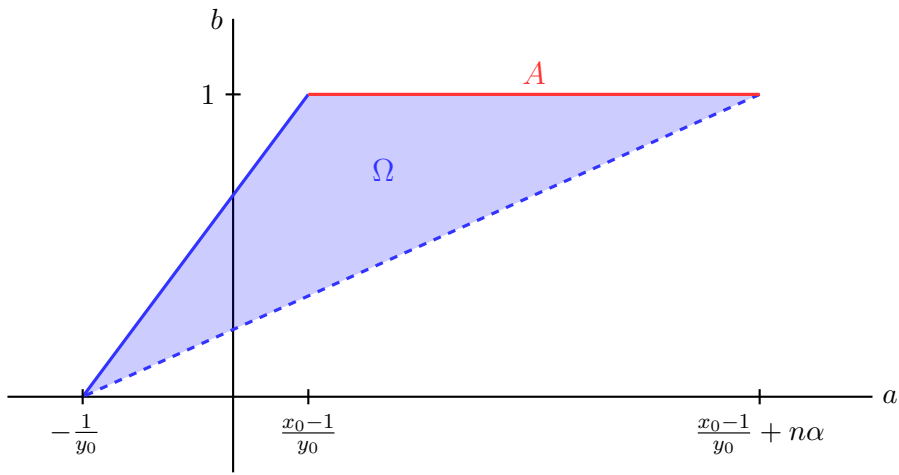
\begin{figure}%[h]
    \begin{tikzpicture}
        \draw[thick] (-3,0)--(8.5,0);
        \draw[thick] (0,-1)--(0,5);
        \fill[fill=blue, opacity=0.2] (-2,0)--(1,4)--(7,4);
        \node[blue!80] at (2,3) {\large $\Omega$};
        \draw[very thick, blue!80] (-2,0)--(1,4);
        \draw[very thick, blue!80, dashed] (7,4)--(-2,0);
        \draw[very thick, red!80] (1,4)--(7,4);
        \node[above,red!80] at (4,4) {\large $A$};
        \node[right] at (8.5,0) {$a$};
        \node[left] at (0,5) {$b$};
        \draw[thick] (-2,-0.1)--(-2,0.1);
        \node[below] at (-2.1,-0.1) {$-\frac{1}{y_0}$};
        \draw[thick] (1,-0.1)--(1,0.1);
        \node[below] at (1,-0.1) {$\frac{x_0-1}{y_0}$};
        \draw[thick] (7,-0.1)--(7,0.1);
        \node[below] at (7,-0.1) {$\frac{x_0-1}{y_0}+n\alpha$};
        \draw[thick] (-0.1,4)--(0.1,4);
        \node[left] at (-0.1,4) {$1$};
    \end{tikzpicture}
    \caption{Our parameterization of the transversal $\Omega$. Its top edge, $A$, is shown in red.}
    \label{fig:Omega}
\end{figure}

Compared to the parameterization given in \eqref{eq:KSWtriangle}, which is taken from \cite{KSW}, we have rotated $\Omega$ counterclockwise by $\pi/2$ radians; see Figure~\ref{fig:Omega}. This rotation corresponds to replacing $M_{a,b}$ with $M_{b,-a}$, so that now a point $(a,b)\in\Omega$ represents the translation surface
\[
M_{b,-a}C\cdot (X,\omega) = \begin{pmatrix}b&-a\\0&1/b\end{pmatrix}C\cdot (X,\omega)
\]
which has a short horizontal saddle connection of length $b\leq 1$. Now a vector $(x,y)\in \Lambda$ is a candidate winning vector at $(a,b)$ if $M_{b,-a}\cdot(x,y) = (bx-ay, b^{-1}y)$ has positive $y$-coordinate and $x$-coordinate in $(0,1]$. 
This gives rise to the following rotated re-definitions of the candidacy strips from Definition~\ref{def:KSWcandidacystrips}:
\begin{defn}\label{def:ourcandidacystrips}
    Given $(x,y)\in\Lambda$, define the strip $S_{\Omega}(x,y)$ in the plane of $\Omega$ by
    \[
    S_{\Omega}(x,y) = \{ (a,b)\in\bbR^2 \; \vert \; 0 < bx-ay \leq 1,\; b>0 \}.
    \]
    Similarly, given $(a,b)\in\Omega$, define $S_{\Lambda}(a,b)$ in the plane of $\Lambda$ by
    \[
    S_{\Lambda}(a,b) = \{ (x,y)\in\bbR^2 \; \vert \; 0 < bx-ay \leq 1,\; y>0 \}.
    \]
\end{defn}
Figure~\ref{fig:candidacy-strips} demonstrates why we chose this reparameterization of $\Omega$: The rotation makes it so that a point $(a,b)$ or vector $(x,y)$ lies on the boundary of the strip it defines, rather than lying at a location with no clear visual relation to its strip (although we note that we are viewing them in the ``wrong" plane, as $(x,y)\in\Lambda$ defines a strip in the plane of $\Omega$, and vice versa). In particular, holonomy vectors and points in $\Omega$ now have the same slope as the strips they define, which may make them more intuitive to work with. 

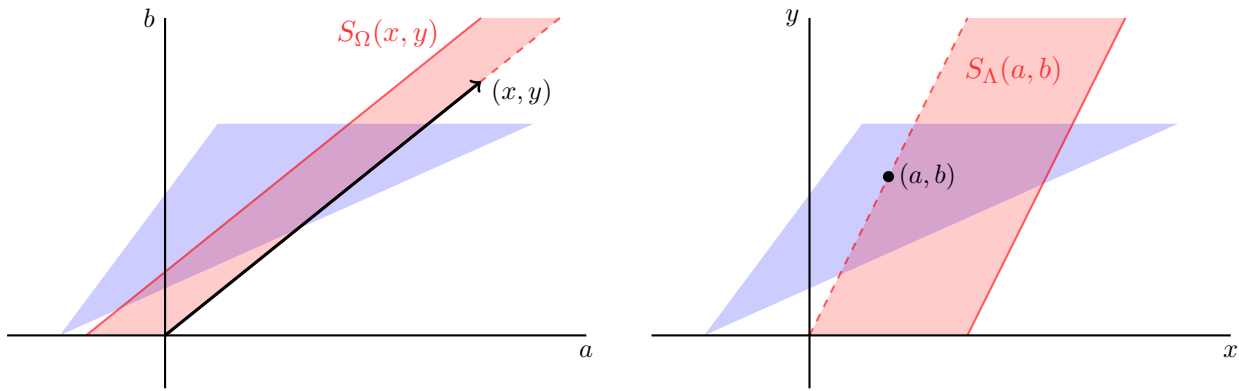
\begin{figure}%[h]
    \begin{tikzpicture}[scale=0.7]
        \fill[fill=blue, opacity=0.2] (-2,0)--(1,4)--(7,4);
        \fill[fill=red, opacity=0.2] (-1.5,0)--(6,6)--(7.5,6)--(0,0);
        \draw[thick,red,opacity=0.6] (-1.5,0)--(6,6);
        \draw[thick,dashed,red,opacity=0.6] (7.5,6)--(0,0);
        \draw[->,very thick] (0,0)--(6,4.8);
        \node[right] at (6,4.6) {\small $(x,y)$};
        \node[left,red!80] at (5.4,5.7) {$S_\Omega(x,y)$};
        \node[below] at (8,0) {\small $a$};
        \node[left] at (0,6) {\small $b$};
        \draw[thick] (-3,0)--(8,0);
        \draw[thick] (0,-1)--(0,6);
    \end{tikzpicture}
    \hspace{1em}
    \begin{tikzpicture}[scale=0.7]
        \fill[fill=blue, opacity=0.2] (-2,0)--(1,4)--(7,4);
        \fill[fill=red, opacity=0.2] (0,0)--(3,6)--(6,6)--(3,0);
        \draw[thick,dashed,red,opacity=0.6] (0,0)--(3,6);
        \draw[thick,red,opacity=0.6] (6,6)--(3,0);
        \node[left,red!80] at (5,5) {$S_\Lambda(a,b)$};
        \fill[black] (1.5,3) circle (3pt);
        \node[right] at (1.5,3) {\small $(a,b)$};
        \node[below] at (8,0) {\small $x$};
        \node[left] at (0,6) {\small $y$};
        \draw[thick] (-3,0)--(8,0);
        \draw[thick] (0,-1)--(0,6);
    \end{tikzpicture}
    \caption{Candidacy strips $S_\Omega(a,b)$ and $S_\Lambda(x,y)$ under our reparameterization. Observe that $(x,y)$ and $(a,b)$ lie on the boundary of, and have the same slopes as, the strips they define.}
    \label{fig:candidacy-strips}
\end{figure}

If a vector $v=(x,y)\in\Lambda$ wins at the point $(a,b)\in\Omega$, then the return time function at that point under this new parameterization is given by 
\[
\mathcal{R}(a,b) = \slp(M_{b,-a}v) = \frac{y}{b(bx-ay)}.
\]

\subsection{Winning Vectors Along the Top Edge of $\Omega$}

We now define a subset of $\Omega$ that will be of particular interest to us:
\begin{defn}
    Let $A$ denote the top edge of $\Omega$. Explicitly,
    \begin{align*}
        A &= \Omega \cap \{b = 1\} \\
        &= \left\{ (a,1) \in \bbR^2 \mmid \frac{x_0 - 1}{y_0} \leq a < \frac{x_0 - 1}{y_0} + n\alpha \right\}.
    \end{align*}
    Instead of a horizontal line segment in $\bbR^2$, we will usually think of $A$ as an interval in $\bbR$:
    $$A = \left[\frac{x_0 - 1}{y_0}, \frac{x_0 - 1}{y_0} + n\alpha \right).$$
    By abuse of notation, we will write $A$ to mean either the segment or the interval depending on context.
\end{defn}

This subset is significant because it turns out that any vector that wins anywhere on $\Omega$ must win somewhere along the top edge. This fact was proved as Lemma A.1 of \cite{Previous_SUMRY} in the specific context of the regular $2n$-gon surface, but we show that this phenomenon holds more generally for any Veech surface parameterized in this way.\footnote{Since writing this article, we learned of unpublished work by Michael Beers and Gabriela Brown, who independently proved a similar result in the context of square-tiled surfaces and developed efficient code for computing the slope gap distributions of square-tiled surfaces \cite{BB-STSs}.} In fact, we can say more:

\begin{lem}
    Let $(x,y)\in \Lambda$ be a vector that wins on $\Omega$. Then $(x,y)$ wins at the point $\left(\frac{x - 1}{y}, 1\right)\in A$ where the left edge of the strip $S_\Omega(x,y)$ meets the top edge of $\Omega$.
    \label{lem:A_winners_are_Omega_winners}
 \end{lem}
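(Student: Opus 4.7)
The plan is as follows. Set $(a^*, 1) := ((x-1)/y, 1)$ and observe first that $(x, y)$ is a candidate there, since $1 \cdot x - a^* y = 1 \in (0,1]$ and $y > 0$. A crucial algebraic reduction will streamline everything: for any two holonomy vectors $v, v'$ simultaneously candidates at some $(a,b) \in \Omega$, clearing positive denominators in the formula $\mathcal{R}_v(a,b) = y/(b(bx-ay))$ reduces the comparison $\mathcal{R}_v(a,b) \leq \mathcal{R}_{v'}(a,b)$ to $yx' \leq xy'$, i.e.\ $\slp(v) \leq \slp(v')$---independently of $(a,b)$. So the winner at any point is simply the candidate of smallest slope, with length breaking ties.

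To show $(x,y)$ wins at $(a^*, 1)$, I argue by contradiction: suppose some $v' = (x',y') \in \Lambda$ is a candidate at $(a^*, 1)$ with $\slp(v') \leq \slp(v)$ (and strictly shorter when equal). I will show $v'$ must also be a candidate at $(a_0, b_0)$, which by the reduction above forces $v'$ to beat $v$ at $(a_0, b_0)$, contradicting the hypothesis. The key technical step is to reparametrize $\Omega \cap S_\Omega(v)$ via coordinates $(s, b)$ with $s := bx - ay \in (0,1]$; under this change $(a^*, 1) \leftrightarrow (1,1)$ and $(a_0, b_0) \leftrightarrow (s_0, b_0)$ with $s_0, b_0 \in (0,1]$. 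The candidacy function of $v'$ then becomes
\[
L(s, b) := bx' - ay' = \frac{(yx' - xy')b + y' s}{y},
\]
a linear function whose coefficients are both nonnegative when $\slp(v') \leq \slp(v)$, and strictly positive in $s$. Monotonicity of $L$ in both variables then yields $0 < y' s_0 / y \leq L(s_0, b_0) \leq L(1, 1) \leq 1$, so $v'$ is indeed a candidate at $(a_0, b_0)$, as needed.

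It remains to confirm $(a^*, 1) \in A$, i.e.\ $a^* \in [(x_0-1)/y_0, (x_0-1)/y_0 + n\alpha)$. For the lower bound, I would first show any winning $v$ satisfies $\slp(v) \leq \slp(x_0, y_0)$: otherwise, candidacy of $v$ at $(a_0, b_0)$ together with the required non-candidacy of $(x_0, y_0)$ there (which would otherwise beat $v$) forces contradictory inequalities on $x_0/y_0$ versus $x/y$. Combined with $y \geq y_0$, this yields $a^* \geq (x_0-1)/y_0$ after a short manipulation. For the upper bound, I would consider the parabolic translate $v'' := (x + n\alpha y, y) \in \Lambda$, which has strictly smaller slope than $v$; if $a^* \geq (x_0-1)/y_0 + n\alpha$, a direct strip calculation shows that wherever $v$ is a candidate on $\Omega$, so too is $v''$, again contradicting that $v$ wins. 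I expect this last verification---pinning $(a^*, 1)$ precisely within the fundamental domain $A$ rather than just on the line $b=1$---to be the most intricate bookkeeping; the main winning argument itself is remarkably clean thanks to the slope/return-time equivalence.
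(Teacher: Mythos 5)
Your core argument and the lower-bound verification are correct and take a genuinely cleaner route than the paper's. The reparametrization $s := bx - ay$, which sends $(a^*,1) \mapsto (1,1)$ and turns the candidacy condition for $v' = (x',y')$ into the bilinear expression
\[
L(s,b) = \frac{(x'y - xy')\,b + y'\,s}{y},
\]
is an elegant way to package the monotonicity that the paper extracts by a chain of hand-tuned inequalities (their display \eqref{eq:criticalinequality}). Since $\slp(v') \le \slp(v)$ gives $x'y - xy' \ge 0$ and $y'/y > 0$, both coefficients are nonnegative and the lower coefficient is strictly positive, so $0 < (y'/y)s_0 \le L(s_0,b_0) \le L(1,1) \le 1$ immediately shows $v'$ is a candidate at $(a_0,b_0)$, forcing the contradiction. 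This is tidier than the paper's argument and gives the same conclusion. Your lower-bound argument for $a^* \ge (x_0-1)/y_0$ is also sound, though stated differently: you deduce non-candidacy of $(x_0,y_0)$ at $(a_0,b_0)$, giving $a_0 \ge b_0 x_0/y_0$, while candidacy of $(x,y)$ gives $a_0 < b_0 x/y$, contradicting $x/y < x_0/y_0$. The paper instead argues by strip containment ($S_\Omega(x,y) \cap \Omega \subseteq S_\Omega(x_0,y_0) \cap \Omega$). Both work.

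The upper bound is where the gap lies. Your sketch introduces the parabolic translate $v'' = (x + n\alpha y, y)$ and claims that under $a^* \ge (x_0-1)/y_0 + n\alpha$ a ``direct strip calculation'' shows $v''$ is a candidate wherever $v$ is. This does not hold as a nontrivial implication: candidacy of $v''$ requires $(bx - ay) + bn\alpha y \le 1$, which fails whenever $bx - ay$ is close to $1$. The resolution is that under your assumption $a^* \ge (x_0-1)/y_0 + n\alpha$, combined with $y \ge y_0$, the entire left edge of $S_\Omega(x,y)$ over $b \in [0,1]$ lies to the right of (or on) the open right boundary of $\Omega$, so $S_\Omega(x,y) \cap \Omega = \emptyset$ and $v$ is never even a candidate on $\Omega$. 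That already contradicts the hypothesis that $v$ wins somewhere, and is the paper's argument; your claim about $v''$ is then only vacuously true and the parabolic translate is superfluous. You should replace the $v''$ step with this direct emptiness observation --- it is shorter and avoids the false impression that a strip calculation establishes candidacy of $v''$ at a point where $v$ is a candidate.
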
   

\begin{proof}
The left edge of the candidacy strip $S_\Omega(x,y)$ intersects the line $b=1$ at the point $\left(\frac{x - 1}{y}, 1\right)$. We will start by showing that this point really is in $A$, i.e.\ that
\[
\frac{x_0-1}{y_0}\leq \frac{x-1}{y}<\frac{x_0-1}{y_0}+n\alpha.
\]
Observe that the $x$-intercept of the left edge of $S_\Omega(x,y)$ is given by $-\frac{1}{y}$. By construction, $y_0\leq y$ for any holonomy vector $(x,y)\in \Lambda$, which means that $-\frac{1}{y_0}\leq -\frac{1}{y}$. Then if $\frac{x_0-1}{y_0}+n\alpha\leq\frac{x - 1}{y}$, it would imply that the strip $S_\Omega(x,y)$ lies entirely below the bottom edge defining $\Omega$, so $S_\Omega(x,y)\cap\Omega=\emptyset$ (see Figure~\ref{fig:left-edge-in-A-1}). This would imply $(x,y)$ cannot win at any point on $\Omega$.

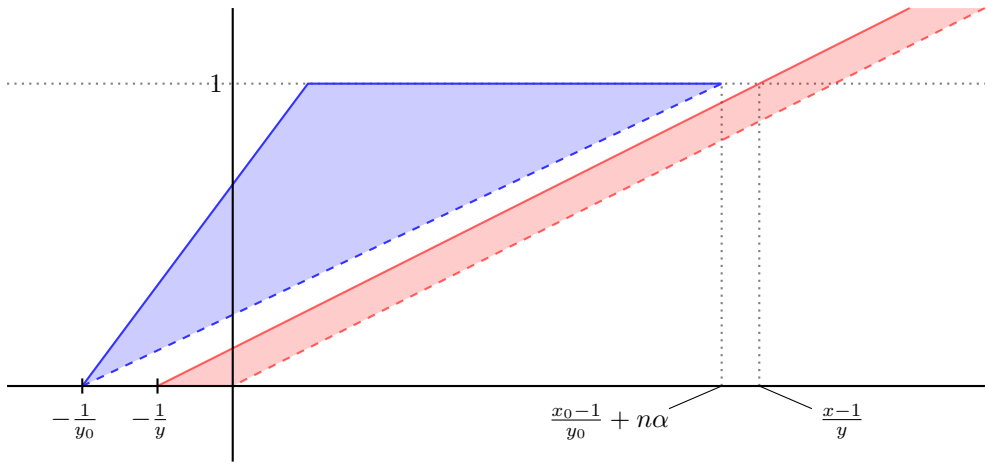
\begin{figure}%[h]
    \begin{tikzpicture}
        \draw[gray,dotted,thick] (-3,4)--(10,4);
        \draw[gray,dotted,thick] (6.5,0)--(6.5,4);
        \draw[gray,dotted,thick] (7,0)--(7,4);
        \fill[fill=blue, opacity=0.2] (-2,0)--(1,4)--(6.5,4);
        \draw[thick,blue!80] (-2,0)--(1,4)--(6.5,4);
        \draw[thick,dashed,blue!80] (-2,0)--(6.5,4);
        \fill[fill=red, opacity=0.2] (-1,0)--(9,5)--(10,5)--(0,0);
        \draw[thick,red,opacity=0.6] (-1,0)--(9,5);
        \draw[thick,dashed,red,opacity=0.6] (10,5)--(0,0);
        \draw[thick] (-3,0)--(10,0);
        \draw[thick] (0,-1)--(0,5);
        \draw[thick] (-2,-0.1)--(-2,0.1);
        \draw[thick] (-1,-0.1)--(-1,0.1);
        \node[below] at (-1.1,-0.1) {\small $-\frac{1}{y}$};
        \node[below] at (-2.1,-0.1) {\small $-\frac{1}{y_0}$};
        \node[below] at (5,-0.1) {\small $\frac{x_0-1}{y_0}+n\alpha$};
        \draw (5.8,-0.3)--(6.5,0);
        \draw (7,0)--(7.7,-0.3);
        \node[below] at (8.1,-0.1) {\small $\frac{x-1}{y}$};
        \node[left] at (0,4) {\small $1$};
    \end{tikzpicture}
    \caption{If $\frac{x_0-1}{y_0}+n\alpha\leq\frac{x - 1}{y}$, then $(x,y)$ is not a candidate winning vector (hence not a winner) at any point on $\Omega$.}
    \label{fig:left-edge-in-A-1}
\end{figure}

On the other hand, if $\frac{x-1}{y} < \frac{x_0-1}{y_0}$, then the left edge of $S_\Omega(x,y)$ falls to the left of the left edge of $S_\Omega(x_0,y_0)$, which coincides with the left edge of $\Omega$ by construction. But since the right edge of both strips starts at the origin, this implies that $S_\Omega(x,y)\cap\Omega\subseteq S_\Omega(x_0,y_0)\cap\Omega$ (see Figure~\ref{fig:left-edge-in-A-2}). Moreover, the assumption that $\frac{x-1}{y} < \frac{x_0-1}{y_0}$, together with $y_0\leq y$, implies that the slope of $(x,y)$ is greater than that of $(x_0,y_0)$ (where we maintain the convention that an undefined slope is larger than any positive slope and smaller than any negative slope). Therefore $(x_0,y_0)$ would win over $(x,y)$ at any point in $\Omega$ where $(x,y)$ is a candidate winning vector. Thus if $(x,y)$ wins at some point in $\Omega$, we must have 
\[
\frac{x_0-1}{y_0}\leq \frac{x-1}{y}<\frac{x_0-1}{y_0}+n\alpha
\]
as desired.

\begin{figure}%[h]
    \begin{tikzpicture}
        \draw[gray,dotted,thick] (-3,4)--(10,4);
        \draw[gray,dotted,thick] (1,0)--(1,4);
        \draw[gray,dotted,thick] (3,0)--(3,4);
        \fill[fill=blue, opacity=0.2] (-2,0)--(3,4)--(9,4);
        \draw[thick,blue!80] (-2,0)--(3,4)--(9,4);
        \draw[thick,dashed,blue!80] (-2,0)--(9,4);
        \fill[fill=yellow, opacity=0.2] (-2,0)--(4.2,5)--(6.2,5)--(0,0);
        \draw[thick,orange,opacity=0.3] (-2,0)--(4.2,5);
        \draw[thick,dashed,orange,opacity=0.3] (6.2,5)--(0,0);
        \fill[fill=red, opacity=0.2] (-1,0)--(1.5,5)--(2.5,5)--(0,0);
        \draw[thick,red,opacity=0.6] (-1,0)--(1.5,5);
        \draw[thick,dashed,red,opacity=0.6] (2.5,5)--(0,0);
        \draw[thick] (-3,0)--(10,0);
        \draw[thick] (0,-1)--(0,5);
        \draw[thick] (-2,-0.1)--(-2,0.1);
        \draw[thick] (-1,-0.1)--(-1,0.1);
        \draw[thick] (1,-0.1)--(1,0.1);
        \draw[thick] (3,-0.1)--(3,0.1);
        \node[below] at (-1.1,-0.1) {\small $-\frac{1}{y}$};
        \node[below] at (-2.1,-0.1) {\small $-\frac{1}{y_0}$};
        \node[below] at (3,-0.1) {\small $\frac{x_0-1}{y_0}$};
        \node[below] at (1,-0.1) {\small $\frac{x-1}{y}$};
        \node[left] at (0,4) {\small $1$};
    \end{tikzpicture}
    \caption{The strip $S_\Omega(x,y)$ is shown in red and the strip $S_\Omega(x_0,y_0)$ is shown in yellow. If $\frac{x-1}{y} < \frac{x_0-1}{y_0}$, then $(x_0,y_0)$ would beat $(x,y)$ wherever $(x,y)$ is a candidate to win on $\Omega$, so $(x,y)$ cannot win at any point on $\Omega$.}
    \label{fig:left-edge-in-A-2}
\end{figure}
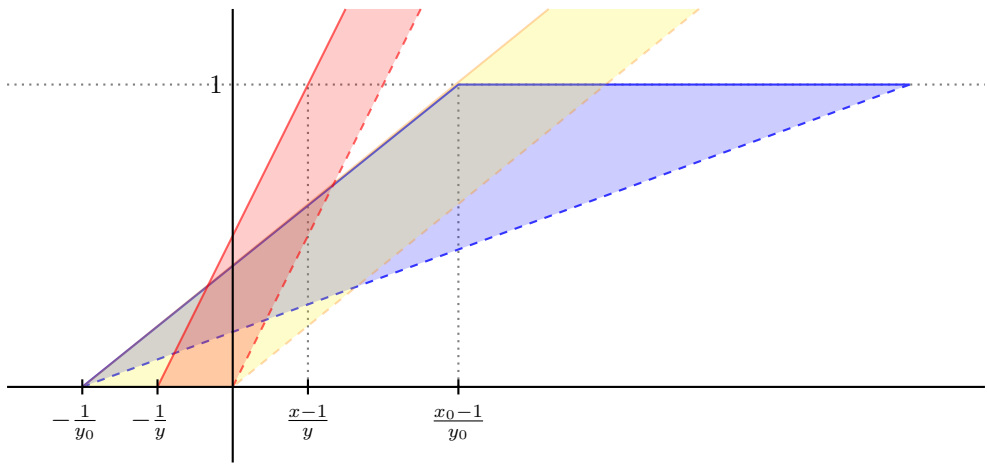

Now suppose for the sake of contradiction that $(x,y)$ wins at $(a,b)\in \Omega$, but another vector $(u,v)$ wins at the point $\left(\frac{x - 1}{y}, 1\right)\in A$. 
We know $\left(\frac{x - 1}{y}, 1\right)\in S_\Omega(x,y)$, so $(x,y)$ is a candidate winning vector at $\left(\frac{x - 1}{y}, 1\right)$. This means that the slope of $(u,v)$ must be no greater than that of $(x,y)$ in order to win, i.e.
\begin{equation}
\frac{u}{v} \geq \frac{x}{y} 
\label{eq:biggerslope}
\end{equation} 
Additionally, if the slopes are equal, then $(u,v)$ must be shorter than $(x,y)$ in order to win. 

Since $(u,v)$ wins at $\left(\frac{x - 1}{y}, 1\right)$, we must also have that $\left(\frac{x - 1}{y}, 1\right) \in S_{\Omega}(u, v)$. Thus $$0 < u - \left(\frac{x - 1}{y}\right)v \le 1,$$ which means that $$0 \leq \frac{u}{v} - \frac{x}{y} \le \frac{1}{v} - \frac{1}{y} $$ where the first inequality is determined by \eqref{eq:biggerslope}. Then, for every $b \in (0,1]$, we get $$0 \leq b \left(\frac{u}{v} - \frac{x}{y}\right) \le \frac{1}{v} - \frac{1}{y} $$ meaning that 
\begin{equation}
    b \left(\frac{x}{y}\right) - \frac{1}{v} \leq b \left(\frac{u}{v}\right) - \frac{1}{v} \le b \left(\frac{x}{y}\right) - \frac{1}{y}.
     \label{eq:criticalinequality}
\end{equation} 
Since we assumed that $(x,y)$ wins at $(a,b)\in\Omega$, we know that $(a,b) \in  S_{\Omega}(x, y) \cap \Omega$. This means that $0 < bx - ay \le 1$ and $b \in (0,1]$, which implies $$a \ge b\left(\frac{x}{y}\right) - \frac{1}{y}\ge b\left(\frac{u}{v}\right) - \frac{1}{v}\geq b \left(\frac{x}{y}\right) - \frac{1}{v}$$ where we have used the inequality in \eqref{eq:criticalinequality}. From this we can derive that $$v\left(b \left(\frac{x}{y}\right) - a\right) \leq bu - av \le 1.$$ Since both $v>0$ and $b \left(\frac{x}{y}\right) - a>0$ (from $0 < bx - ay \leq 1$), this implies $0<bu-av\leq 1$, so $(a,b) \in  S_{\Omega}(u, v).$ However, this contradicts our original assumption that $(x,y)$ wins at $(a,b)\in \Omega$, since $(u,v)$ should beat $(x,y)$ at the point $(a,b)$. Hence, such a vector $(u,v)$ cannot exist and $(x,y)$ must win at the point $\left(\frac{x - 1}{y},1\right)$.

\end{proof}

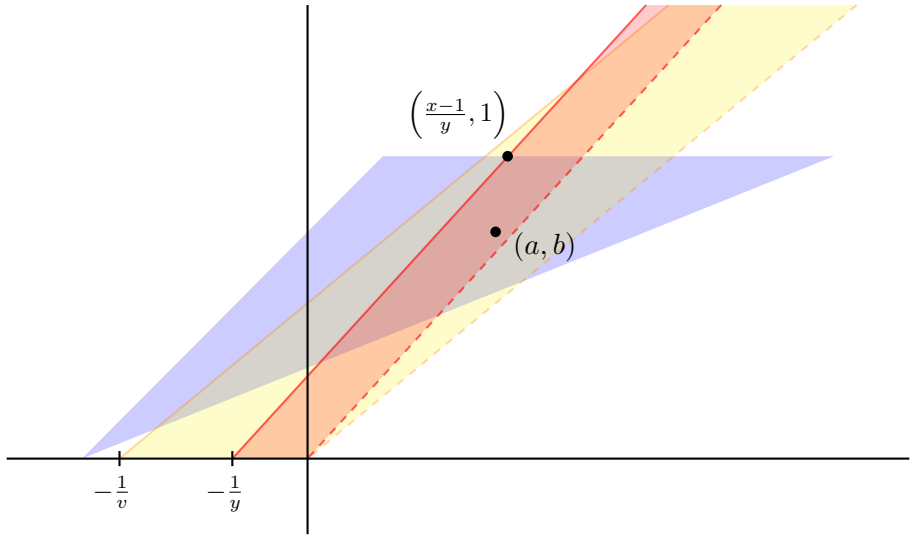
\begin{figure}%[h]
    \begin{tikzpicture}
        \fill[fill=blue, opacity=0.2] (-3,0)--(1,4)--(7,4);
        \fill[fill=yellow, opacity=0.2] (-2.5,0)--(4.8,6)--(7.3,6)--(0,0);
        \draw[thick,orange,opacity=0.3] (-2.5,0)--(4.8,6);
        \draw[thick,dashed,orange,opacity=0.3] (7.3,6)--(0,0);
        \fill[fill=red, opacity=0.2] (-1,0)--(4.5,6)--(5.5,6)--(0,0);
        \draw[thick,red,opacity=0.6] (-1,0)--(4.5,6);
        \draw[thick,dashed,red,opacity=0.6] (5.5,6)--(0,0);
        \draw[thick] (-4,0)--(8,0);
        \draw[thick] (0,-1)--(0,6);
        \draw[thick] (-2.5,-0.1)--(-2.5,0.1);
        \draw[thick] (-1,-0.1)--(-1,0.1);
        \node[below] at (-1.1,-0.1) {\small $-\frac{1}{y}$};
        \node[below] at (-2.6,-0.1) {\small $-\frac{1}{v}$};
        \node[above] at (2,4.1) {\small $\left(\frac{x-1}{y},1\right)$};
        \fill[black] (2.66,4) circle (2pt);
         \node[right] at (2.6,2.8) {$(a,b)$};
        \fill[black] (2.5,3) circle (2pt);
    \end{tikzpicture}
    \caption{The strip $S_\Omega(x,y)$ is shown in red and the strip  $S_\Omega(u,v)$ is shown in yellow. Any $S_\Omega(u,v)$ that contains the point $\left(\frac{x-1}{y},1\right)$ and also has slope less than that of $(x,y)$ will necessarily contain $(a,b)$, contradicting the assumption that $(x,y)$ wins at $(a,b)$.}
    \label{fig:top-edge-winners-proof}
\end{figure}

Lemma~\ref{lem:A_winners_are_Omega_winners} demonstrates that the problem of finding winning vectors on all of $\Omega$ reduces to finding winning vectors along the top edge. In light of the results of \cite{KSW}, we know this will be a finite set. Once we have found the winning vectors on $A$, we can determine the winning vector at every point of $\Omega$ by plotting the candidacy strips for each winner, which will necessarily cover $\Omega$. For any point that is contained in multiple strips, the winner at that point is given by the vector of least slope. This partitions $\Omega$ into a finite set of polygonal regions.

It turns out that we can refine the result of Lemma~\ref{lem:A_winners_are_Omega_winners} even further. 
Given any holonomy vector, we can consider the set of all points in $A$ where that vector wins. Our next result, Lemma~\ref{lem:W_is_interval}, tells us that this set is an interval of a specific form. 

We begin with two more definitions:
\begin{defn}
    For $(x,y) \in \Lambda$, define its \emph{candidacy interval} to be the set
    \begin{align*}
        I(x,y) &= S_{\Omega}(x,y) \, \cap \, \{b = 1\}. 
    \end{align*}
    Similarly to $A$, instead of a line segment in $\bbR^2$, we usually treat $I(x,y)$ as an interval in $\bbR$:
    \begin{align*}
        I(x,y) &= \left[ \frac{x-1}{y}, \frac{x}{y} \right).
    \end{align*}
    \label{def:candidacy-interval}
\end{defn}

\begin{defn}
    For $(x,y)\in \Lambda$, let $W(x,y)$ denote the subset of $A$ where $(x,y)$ is a winning vector. Like $A$ and $I(x,y)$, we usually treat $W(x,y)$ as a subset of $\bbR$.
    \label{def:winning-interval}
\end{defn}

Note that $W(x,y) \subseteq I(x,y)$ and $W(x,y) \subseteq A$. From the definition, it would seem that $W(x,y)$ could possibly be disconnected, but the next lemma proves that this is not the case. 

\begin{lem} \label{lem:W_is_interval}
    For all $(x,y) \in \Lambda$, either $W(x,y)$ is empty or $W(x,y) = \left[\frac{x-1}{y}, s\right)$ for some $s$ with $\frac{x-1}{y}<s \leq \min\left\{\frac{x}{y},\frac{x_0-1}{y_0}+n\alpha\right\}$.
\end{lem}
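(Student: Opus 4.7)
The plan is to anchor at $(x-1)/y$ using Lemma~\ref{lem:A_winners_are_Omega_winners}, then show that every holonomy vector that can ``steal'' a point of $A$ from $(x,y)$ can only steal a right-portion of $I(x,y)$ extending up to $x/y$. The union of such right-portions is itself a right-portion, so the complement $W(x,y)$ is forced to be a single left-closed, right-open interval of the required form.

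Call $(u,v)\in\Lambda$ a \emph{beater} of $(x,y)$ if $\slp(u,v)\leq\slp(x,y)$ with ties broken by shorter length; such a vector outperforms $(x,y)$ wherever both are candidates. Assume $W(x,y)\neq\emptyset$, so by Lemma~\ref{lem:A_winners_are_Omega_winners} we have $(x-1)/y\in W(x,y)$. I would first rule out equal-slope beaters: any hypothetical $(u,v)=k(x,y)$ with $0<k<1$ would satisfy $0<u-\tfrac{x-1}{y}v=k\leq 1$, making it a candidate at $(x-1)/y$ that beats $(x,y)$ there---contradicting the assumed win. So every beater of $(x,y)$ has strictly smaller slope, which in the paper's angle-based ordering forces $u,v>0$ and $u/v>x/y$.

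Next, fix any beater $(u,v)$ whose candidacy interval overlaps $I(x,y)$. The right endpoint of $I(u,v)$ lies strictly past that of $I(x,y)$ since $u/v>x/y$. On the left, $(u,v)$ cannot be a candidate at $(x-1)/y$, so $(x-1)/y\notin[(u-1)/v,u/v)$; combined with $u/v>x/y>(x-1)/y$, this forces $(u-1)/v>(x-1)/y$. Hence $I(u,v)\cap I(x,y)=[(u-1)/v,\,x/y)$, a right-portion of $I(x,y)$ sharing its right endpoint. The union over all such beaters is therefore $[c^*,\,x/y)$, where $c^*=\inf\{(u-1)/v:(u,v)\text{ a beater with overlap}\}$ (with $c^*:=x/y$ if no such beater exists). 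This infimum is attained: for any beater $(u,v)$ with $(u-1)/v\leq c_0<x/y$, the identity $1/v=u/v-(u-1)/v>x/y-c_0$ bounds $v$, and then discreteness of $\Lambda$ leaves only finitely many candidates for the minimum.

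Putting everything together, $W(x,y)=(I(x,y)\cap A)\setminus[c^*,x/y)=\left[\tfrac{x-1}{y},\,s\right)$ with $s=\min\{c^*,\,(x_0-1)/y_0+n\alpha\}$, and the required bounds $(x-1)/y<s\leq\min\{x/y,\,(x_0-1)/y_0+n\alpha\}$ follow immediately from $c^*>(x-1)/y$ and $c^*\leq x/y$. The main obstacle I anticipate is the case analysis in the second paragraph---ruling out equal-slope tie-breakers and using the paper's nonstandard slope ordering to exclude beaters with $u\leq 0$---since every subsequent step hinges on the strict inequalities $u/v>x/y$ and $(u-1)/v>(x-1)/y$ being available for the geometric pinning-down of $I(u,v)\cap I(x,y)$.
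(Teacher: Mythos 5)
Your argument is correct and takes a genuinely different route from the paper's. The paper proves the down-closure property that $a\in W(x,y)$ forces $\left[\frac{x-1}{y},a\right]\cap A\subseteq W(x,y)$, by inspecting the winner $(u,v)$ at an intermediate point $a'$ and showing $(u,v)=(x,y)$ via two slope comparisons and the length tie-break, then takes a union and deduces the open right endpoint from the finiteness of the winner set. You instead characterize $W(x,y)$ as $(I(x,y)\cap A)\setminus\bigcup_{\text{beaters}}I(u,v)$, showing each beater's overlap with $I(x,y)$ is the right-portion $\left[\frac{u-1}{v},\frac{x}{y}\right)$, so their union is again a right-portion and the complement is a left-closed, right-open interval. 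Both approaches exploit the same two core inputs---no beater is a candidate at $\frac{x-1}{y}$ (where $(x,y)$ wins by Lemma~\ref{lem:A_winners_are_Omega_winners}), which forces $\frac{u-1}{v}>\frac{x-1}{y}$, and $\frac{u}{v}>\frac{x}{y}$, which pushes the beater's right endpoint past $\frac{x}{y}$---but you pay with an extra discreteness argument showing the infimum of the beaters' left endpoints is attained, where the paper instead cites the finiteness of the winner set from \cite{KSW}. One innocuous slip: your claim that beaters must satisfy $u>0$ only follows once one knows $x>0$, which you do not verify; fortunately your subsequent steps rely only on $\frac{u}{v}>\frac{x}{y}$, which holds for any candidate $(u,v)$ of strictly smaller slope in the paper's ordering regardless of the sign of $u$, so the proof goes through unchanged.
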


\begin{proof}
    Let $(x,y) \in \Lambda$. We begin by proving the following statement:
    \begin{equation}\label{eq:W_contains_interval}
        a \in W(x,y) \enspace \implies \enspace \left[ \frac{x-1}{y}, a \right] \cap A  \subseteq   W(x,y).
    \end{equation}
    Suppose $a \in W(x,y)$, and let $a' \in \left[ \frac{x-1}{y}, a \right] \cap A$. We want to show $a' \in W(x,y)$. 

    First, we know $a \in I(x,y) = \left[ \frac{x-1}{y}, \frac{x}{y} \right)$ because $W(x,y) \subseteq I(x,y)$. In particular, $a < \frac{x}{y}$. That means $I(x,y)$ contains $a'$, since 
    $$\frac{x-1}{y} \leq a' \leq a < \frac{x}{y}.$$
    Then since $a' \in A$, $(x,y)$ is a candidate winner at $a'$.
    
    Next, let $(u,v)$ be the winning vector at $a'$. We know $(x,y)$ is a candidate winner at $a'$ but does not beat $(u,v)$ at $a'$, so we must have  $\frac{x}{y} \leq \frac{u}{v}$. Also, we have $a' \in I(u,v) = \left[ \frac{u-1}{v}, \frac{u}{v} \right)$ because $W(u,v) \subseteq I(u,v)$. Then $I(u,v)$ contains $a$:
    $$\frac{u-1}{v} \leq a' \leq a < \frac{x}{y} \leq \frac{u}{v}.$$
    Then since $a \in W(x,y) \subseteq A$, $(u,v)$ is a candidate winner at $a$.

    We've found that $(u,v)$ is a candidate winner at $a$ and that $(x,y)$ is the winner at $a$. Thus we must have $\frac{u}{v} \leq \frac{x}{y}$. Combined with $\frac{x}{y} \leq \frac{u}{v}$, we conclude $(x,y)$ and $(u,v)$ have equal slopes. Recall that when multiple candidates have the least slope, the shortest candidate is the winner. The fact that $(u,v)$ is a candidate at $a$ while $(x,y)$ wins at $a$ implies $(u,v)$ is not shorter than $(x,y)$---otherwise, $(u,v)$ would be the winner at $a$ instead of $(x,y)$. Meanwhile, $(x,y)$ is a candidate at $a'$ while $(u,v)$ wins at $a'$, meaning $(x,y)$ is not shorter than $(u,v)$. So $(x,y)$ and $(u,v)$ have the same length. Their slopes are also equal, so $(x,y) = (u,v)$. Therefore, $(x,y)$ is the winning vector at $a'$, and $a' \in W(x,y)$, as desired. 

    Let $W(x,y)$ be nonempty. 
    By \ref{eq:W_contains_interval}, 
    we can write
    \begin{align*}
        W(x,y) &= \bigcup_{a \in W(x,y)} a \\
        &= \bigcup_{a \in W(x,y)} \left[ \frac{x-1}{y}, a \right] \cap A \\
        &= \left( \bigcup_{a \in W(x,y)} \left[ \frac{x-1}{y}, a \right] \right) \cap A.
    \end{align*}
    The union $\bigcup_{a \in W(x,y)} \left[ \frac{x-1}{y}, a \right]$ is an interval of the form $\left[\frac{x-1}{y}, s \right]$ or $\left[\frac{x-1}{y}, s \right)$, where $s=\sup W(x,y)$. Since $W(x,y)\subseteq A$, we have $s= \sup W(x,y)\leq \sup A = \frac{x_0-1}{y_0}+n\alpha$. Similarly, we have that $W(x,y)\subseteq I(x,y)$, which implies $s\leq \frac{x}{y}$.

    Since there is a unique winning vector at every point in $A$, the winning vectors partition $A$ into a finite set of intervals. Moreover, we already know from Lemma~\ref{lem:A_winners_are_Omega_winners} that the left endpoint of each interval, $\frac{x_-1}{y}$, is contained in $A$ for each winning vector $(x,y)$. Since the right endpoint of $A$ itself is open and each winning interval contains its left endpoint, this implies that the right endpoint of each of the winning intervals must be open. Therefore, we have $W(x,y)=\left[\frac{x-1}{y}, s \right)\cap A$.

    Finally, we observe that intersecting with $A$ is redundant. Since $\frac{x_0-1}{y_0}\leq \frac{x-1}{y}$ and $s\leq \frac{x_0-1}{y_0}+n\alpha$, we have 
    \begin{align*}
        W(x,y) = \left[\frac{x-1}{y}, s \right)\cap A
        = \left[\frac{x-1}{y}, s \right) \cap \left[\frac{x_0-1}{y_0}, \frac{x_0-1}{y_0}+n\alpha \right)
        = \left[\frac{x-1}{y}, s \right)
    \end{align*}
    which yields the result.

\end{proof}

This lemma tells us that $A$ decomposes into a finite number of half-open intervals, with the left endpoint of each interval determined formulaically by the vector that wins there.

\subsection{Left Winning Vectors}
\label{sec:left-winners}

For the algorithm that we present in the next subsection, it will be convenient to work with slightly modified definitions of several of the objects introduced above. Each modification amounts to only changing which edge or endpoint we include in a given set.
\begin{defn}\label{def:left-defs}
Let 
\[
\Omega^L = \left\{ (a,b) \in \bbR^2 \mmid 0 < b \leq 1, \; \frac{x_0}{y_0} b - \frac{1}{y_0} < a \leq \; \right(\frac{x_0}{y_0} + n\alpha\left) b -\frac{1}{y_0} \right\}
\]
and let $A^L$ be its top edge, identified with the interval
\[
A^L = \left(\frac{x_0-1}{y_0}, \frac{x_0-1}{y_0}+n\alpha \right].
\]
We say a vector $(x,y)\in\Lambda$ is a \emph{candidate left winning vector} at $a \in \Omega^L$ if 
it satisfies $y > 0$ and $0 \leq bx - ay < 1$.
\footnote{We designate these definitions as ``left" (somewhat arbitrarily) because we include equality in the left inequality of $0\leq bx-ay< 1$ rather than the right inequality, as in Definition~\ref{def:ourcandidacystrips}} We say it is a \emph{left winning vector} or \emph{left winner} if it is the candidate left winning vector of least slope. If there are multiple vectors of least slope, we take the shortest as the winner. 
    
Additionally, for $(x,y)\in\Lambda$ and $(a,b)\in\Omega^L$, we define the candidacy strips
\begin{align*}
S^L_{\Omega}(x,y) &= \{ (a,b)\in\bbR^2 \; \vert \; 0 \leq bx-ay < 1,\; b>0 \}\\
S^L_{\Lambda}(a,b) &= \{ (x,y)\in\bbR^2 \; \vert \; 0 \leq bx-ay < 1,\; y>0 \}.
\end{align*}
We define $I^L(x,y)$ to be $S_\Omega^L(x,y)\cap\{b=1\}$, identified with the interval
\[
I^L(x,y) = \left(\frac{x-1}{y},\frac{x}{y}\right],
\]
and we define $W^L(x,y)$ to be the subset of $A^L$ on which $(x,y)$ is the left winning vector. 
\end{defn}
We briefly remark that these are purely algebraic definitions---in fact, a ``left winning vector" that wins at the right edge of its candidacy strip, where $bx-ay=0$, \emph{cannot} be a true winning vector at the point $(a,b)
\in\Omega$. This is because a winning vector at that point corresponds to the first holonomy vector $v\in\Lambda$ such that $M_{b,-a}v$ will hit the $x$-axis between $0$ and $1$ when flowed forward by the horocycle flow. However, when $bx-ay=0$, we have that $M_{b,-a}\cdot (x,y) = (0,b^{-1}y)$, which is fixed under the horocycle flow, so it will never hit the $x$-axis. 

Regardless, we see that the respective definitions for the two kinds of winners and their associated sets agree almost everywhere, and the next lemma shows that finding the left winning vectors on $A^L$ is in fact equivalent to finding the winning vectors on $A$.

\begin{lem}\label{lem:left-winners-are-right-winners}
    Let $(x,y)\in\Lambda$. Then $W(x,y) = \left[\frac{x-1}{y},s\right)$ on $A$ if and only if $W^L(x,y) = \left(\frac{x-1}{y},s\right]$ on $A^L$ for some $s\in A^L\cap I^L(x,y)$.
\end{lem}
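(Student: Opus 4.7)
The central structural observation I would exploit is that a holonomy vector $(u,v)$ with $v > 0$ is a right candidate but not a left candidate at $a$ precisely when $u - av = 1$ (equivalently, $a = \frac{u-1}{v}$), and is a left candidate but not a right candidate precisely when $u - av = 0$ (equivalently, $a = \frac{u}{v}$). Thus the notions of ``winner'' and ``left winner'' at a point $a \in A$ can only differ at countably many special values, and for the vector $(x,y)$ itself the only $a$'s where its candidacy status changes are the endpoints $\frac{x-1}{y}$ and $\frac{x}{y}$ of $I(x,y)$. This pinpoints precisely the endpoint behavior I need to verify for each direction.

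For the forward direction, I would assume $W(x,y) = \left[\frac{x-1}{y}, s\right)$ and first apply Lemma \ref{lem:W_is_interval} to confirm $s \in A^L \cap I^L(x,y)$. Then I would verify three things. First, at $a = \frac{x-1}{y}$, the vector $(x,y)$ fails to be a left candidate since $x - ay = 1$, so this endpoint is excluded. Second, at each $a \in \left(\frac{x-1}{y}, s\right]$, $(x,y)$ is a left candidate, and I would show it beats any competing left candidate $(u,v)$ by splitting cases: if $u - av > 0$, then $(u,v)$ is also a right candidate at $a$ (or at some $a' < a$ arbitrarily close, when $a = s$), so $(x,y)$'s status as right winner on $\left[\frac{x-1}{y}, s\right)$ transfers through the slope/length ordering; if $u - av = 0$, then $(u,v)$ has slope $1/a \geq y/x$, with possible equality only when $a = s = \frac{x}{y}$, a case handled via a length comparison that rules out shorter scalar multiples of $(x,y)$ in $\Lambda$ (such a vector would otherwise beat $(x,y)$ on all of $W(x,y)$). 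Third, for $a \in A^L$ with $a > s$, either $(x,y)$ fails to be a left candidate, or the next right winner $(x',y')$, which begins its interval at $s$ by the partition from Lemma \ref{lem:W_is_interval}, is itself a left candidate at $a$ and beats $(x,y)$.

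For the reverse direction, my plan is to argue symmetrically. Assuming $W^L(x,y) = \left(\frac{x-1}{y}, s\right]$, I would verify: at $a = \frac{x-1}{y}$, $(x,y)$ is a right candidate, and any competing right candidate $(u,v)$ either has $u - av < 1$, in which case $(u,v)$ is also a left candidate at $\frac{x-1}{y} + \epsilon$ where $(x,y)$ left-wins (forcing $(x,y)$ to beat $(u,v)$), or has $u - av = 1$, handled by a slope-monotonicity argument along the line $u' - \frac{x-1}{y} v' = 1$ (any such $(u,v)$ with smaller slope than $(x,y)$ would appear as a competing left candidate just past $\frac{x-1}{y}$, contradicting the hypothesis). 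On the open interval $\left(\frac{x-1}{y}, s\right)$, the left-winning property transfers to right-winning by a similar case analysis. At $a = s$, either $s = \frac{x}{y}$ (so $(x,y)$ is not a right candidate) or the left winner on the interval just past $s$ provides a right-candidate competitor beating $(x,y)$ at $s$, shown by taking $\epsilon \to 0$ and invoking the finiteness of left winners from \cite{KSW}.

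The main obstacle I anticipate is the delicate boundary behavior at $a = s$ when $s < \frac{x}{y}$, since there the transition between winning and losing occurs at an interior point of $I(x,y)$ rather than at its natural right endpoint. Pinning down this transition requires invoking the partition property of right winners from Lemma \ref{lem:W_is_interval} and carefully handling the ``exceptional'' competing vectors lying on the critical lines $bu - av \in \{0,1\}$, so as to guarantee that the right-winning-to-left-winning swap at $s$ occurs cleanly and does not produce a discrepancy in the endpoint $s$ between the two sets.
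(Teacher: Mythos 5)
Your forward direction is essentially the paper's argument, though you use $\epsilon$-perturbations where the paper instead picks a representative point $a'$ strictly between two quantities; both techniques work and the case structure is the same. The one place your forward sketch is thinner than the paper's is the regime $a > s$: you invoke ``the next right winner $(x',y')$, which begins its interval at $s$,'' but you have not explained why this vector beats $(x,y)$ in the slope/length ordering at the points $a \in (s, s']$ where $(x,y)$ is still a left candidate --- the paper handles this with an explicit contradiction argument that compares slopes via a carefully chosen intermediate point $a'$, and also explicitly carves out the degenerate case $s = \sup A^L$ where no ``next right winner'' exists.

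The reverse implication is where you genuinely diverge from the paper, and where the real gap lies. You propose a direct, symmetric argument (checking $a = \frac{x-1}{y}$, the open interval, and $a = s$ separately), whereas the paper sidesteps the entire symmetric case analysis with a one-paragraph partition argument: the right-winning intervals partition $A$ into finitely many half-open intervals by Lemma~\ref{lem:W_is_interval} and \cite{KSW}, the already-proved forward implication maps these bijectively onto half-open intervals partitioning $A^L$, and since the latter intervals exhaust $A^L$ there is simply no room for a left winner that fails the stated form. Your direct approach, while feasible in principle, is incomplete as written. The problematic step is at $a = s$ with $s < \frac{x}{y}$: you say ``the left winner on the interval just past $s$ provides a right-candidate competitor beating $(x,y)$ at $s$, shown by taking $\epsilon \to 0$ and invoking the finiteness of left winners from \cite{KSW}.'' But $\cite{KSW}$ establishes finiteness of \emph{right} winners, not left winners; finiteness of left winners is exactly what the forward direction gives you, so invoking it mid-proof risks circularity unless you state clearly that the forward direction is already proved and imports the finite partition. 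Moreover, even granting finiteness, taking $\epsilon \to 0$ on the left-candidacy inequality $0 \le u - (s+\epsilon)v < 1$ yields only $0 \le u - sv \le 1$, not the strict $0 < u - sv$ you need for $(u,v)$ to be a right candidate at $s$; you must separately rule out $u - sv = 0$, which you have not done. The paper's partition argument avoids all of this. I'd recommend either filling in these details carefully or adopting the paper's observation that the forward direction alone already pins down every left-winning interval.
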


\begin{proof}
    Recall that $\cup_{v\in\Lambda} I(v)$ covers $A$. This implies that $\cup_{v\in\Lambda}I^L(v)$ covers $A^L$, so every point on $A^L$ has at least one candidate left winning vector at that point. Then from the definition of left winners and the discreteness of $\Lambda$, it follows that there is a unique left winning vector at every point on $A^L$.

    Suppose $W(x,y) \neq \emptyset$. By Lemma~\ref{lem:W_is_interval}, we know that $W(x,y) = \left[\frac{x-1}{y},s\right)$ for some $s\in A^L\cap I^L(x,y)$. 
    
    First, we will show $s\in W^L(x,y)$. Let $(u,v)$ be the left winner at $s$. Then $s\in I^L(u,v)$, but we also know $s\in I^L(x,y)$. This implies $\frac{x}{y}\leq\frac{u}{v}$, since otherwise $(x,y)$ would beat $(u,v)$ as the left winner at $s$. It also implies that $\frac{u-1}{v}<s$ and $\frac{x-1}{y}<s$, so there exists some $a'$ with $\max\left\{\frac{u-1}{v}, \frac{x-1}{y}\right\} <a' <s$. Then $a'\in W(x,y)\cap I(u,v)$, which implies $\frac{u}{v}\leq \frac{x}{y}$. Since $(x,y)$ is the winning vector at $a'$ and $(u,v)$ is the left winning vector at $s$, we also know that neither vector can be shorter than the other. Therefore, $(u,v)=(x,y)$, so $s\in W^L(x,y)$.

    Now consider $\frac{x-1}{y}<a<s$ and let $(u,v)$ be the left winning vector at $a$. We observe that $a\in I^L(x,y)\cap W^L(u,v)$, so $\frac{x}{y}\leq\frac{u}{v}$. On the other hand, $a\in W(x,y)\cap I(u,v)$, so $\frac{u}{v}\leq \frac{x}{y}$. As neither vector can be shorter than the other, $(u,v)=(x,y)$, so $a\in W^L(x,y)$.

    Next, consider $s<a\leq \frac{x_0-1}{y_0}+n\alpha$, if such an $a$ exists (if $s=\frac{x_0-1}{y_0}+n\alpha$, then we have already handled this case above). Then there exists $a'$ such that $s<a'<a$. Suppose for the sake of contradiction that $(x,y)$ was the left winner at $a$. Observe that since $a\in I^L(x,y)$, we know $\frac{x-1}{y} <s<a'<a\leq \frac{x}{y}$, so $a'\in I(x,y)\cap I^L(x,y)$. Moreover, we know that $(x,y)$ does not win at $a'$, but that $a'\in A$, so there must exist another vector, $(u,v)\neq(x,y)$, that wins at $a'$. Hence $a'\in W(u,v)\cap I(x,y)$, which implies $\frac{x}{y}\leq \frac{u}{v}$. But since $\frac{u-1}{v}<a'<a\leq\frac{x}{y}\leq\frac{u}{v}$, we see that $a\in I^L(u,v)$. 
    But since $(x,y)$ was assumed to be the left winner at $a$, this implies $\frac{u}{v}\leq \frac{x}{y}$. Since $(x,y)$ is the left winner over $(u,v)$ at $a$ and $(u,v)$ is the winner over $(x,y)$ at $a'$, neither vector can be shorter than the other. This contradicts the assumption that $(u,v)\neq(x,y)$. We conclude that $a\notin W^L(x,y)$.
    
    Finally, if $a\leq \frac{x-1}{y}$ or $a>\frac{x_0-1}{y_0}+n\alpha$, then $a\notin I^L(x,y)\cap A^L$, so $a\notin W^L(x,y)$. 

    We have shown that $W(x,y) = \left[\frac{x-1}{y},s\right)\subseteq A$ implies $W^L(x,y) = \left(\frac{x-1}{y},s\right] \subseteq A^L$. For the reverse implication, note that we have already shown that the true winning vectors partition $A$ into a finite collection of intervals of the given form. The forward implication applied to each of these intervals completely exhausts the set $A^L$, so there can be no left winning vectors on $A^L$ that do not satisfy the theorem. 
\end{proof}

\subsection{Algorithm to Find Left Winning Vectors}\label{sec:algorithm}

In Lemmas~\ref{lem:A_winners_are_Omega_winners} and \ref{lem:W_is_interval}, we showed that winning vectors on $\Omega$ win on half-open intervals of $A$, and we gave a formula for the left endpoint of each interval. In Lemma~\ref{lem:left-winners-are-right-winners}, we showed that winning vectors on $A$ are in one-to-one correspondence with so-called ``left winning vectors," on $A^L$, cf. Definition~\ref{def:left-defs}. The left winners win on half-open intervals which are open on the left and closed on the right.\footnote{This is why we work with left winners on $A^L$ instead of winners on $A$. Since the winning intervals for left winners are open on the left, at each step of the algorithm we can search for the left winner at the precise point that is the previous left winning interval's infimum, rather than at some undefined point to the left of the previous winning interval's minimum, as would be the case for true winners.}

The idea behind our algorithm is as follows: Start by finding the left winning vector at the rightmost point of $A^L$. By Lemma~\ref{lem:W_is_interval}, that vector wins along an interval extending leftwards along $A^L$, and we know the precise point where that interval ends. We then repeat the process, finding the left winner at that endpoint and following its winning interval leftwards along $A^L$. Continuing in this way, we work right-to-left until we have determined the winning vectors along all of $A^L$. This reduces the problem of finding all winning vectors on $\Omega$ to finding the left winning vector at an unknown but finite number of points along $A^L$.

\begin{defn}[Algorithm to find winning vectors on $\Omega$] 
\label{def:algorithm}
    \phantom{filler}
   \begin{enumerate}
        \item Initialize the variable $i = 1$, and set $a_i = a_1 = \frac{x_0 - 1}{y_0}+n\alpha$.
        \item Find the left winning vector at $a_i$ (see Section~\ref{sec:winner_at_point} for a technique to do this). Call this vector $(u_i,v_i)$.
        \item Set $a_{i+1} = \frac{u_i - 1}{v_i}$. If $(u_i,v_i) = (x_0, y_0)$, end the algorithm. Otherwise, increment $i$ up by 1, and then return to step (2).
    \end{enumerate}
\end{defn}

\begin{thm} \label{thm:alg_finds_winners}
    Assuming one can find the left winning vectors at the points $a_i$, the algorithm in Definition~\ref{def:algorithm} terminates, and $\{ (u_i, v_i) \}$ is the set of winning vectors on $\Omega$.
\end{thm}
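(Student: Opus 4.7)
The plan is to show that the algorithm walks right-to-left through the partition of $A^L$ into left winning intervals, one cell per iteration, terminating when it reaches the leftmost cell, which turns out to be $W^L(x_0, y_0)$. Termination then follows from finiteness of the partition (guaranteed by \cite{KSW}), and correctness from Lemmas~\ref{lem:A_winners_are_Omega_winners} and~\ref{lem:left-winners-are-right-winners}.

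The main obstacle is verifying that the leftmost cell of the partition really is $W^L(x_0, y_0)$, so that the termination test in step~(3) fires correctly. I would prove this by first showing $(x_0, y_0)$ wins on the entire left edge of $\Omega$, where $b x_0 - a y_0 = 1$. A direct computation shows that, for any candidate $(u,v)$ on this edge, the return time $v/(b(bu-av))$ exceeds the return time with $(x_0, y_0)$, namely $y_0/b$, by
\[
\frac{v x_0 - u y_0}{bu - av}.
\]
The candidate condition $0 < bu - av \leq 1$ forces this quantity to be strictly positive unless $(u,v) = (x_0,y_0)$: when $v > y_0$, the candidacy inequality forces $v x_0 - u y_0 > 0$ directly, and when $v = y_0$, the minimality of $x_0$ among positive $x$-coordinates with $y = y_0$ forces $u \leq 0 < x_0$. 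Thus $(x_0, y_0)$ wins on the left edge, and in particular at the corner $(\tfrac{x_0-1}{y_0}, 1) \in A$. Lemma~\ref{lem:W_is_interval} then gives $W(x_0, y_0) = [\tfrac{x_0-1}{y_0}, s)$, and Lemma~\ref{lem:left-winners-are-right-winners} translates this to $W^L(x_0, y_0) = (\tfrac{x_0-1}{y_0}, s]$; having left endpoint $\inf A^L$, this is necessarily the leftmost cell of the partition.

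Next, I would prove by induction on $i$ that at every iteration, $a_i \in A^L$ and $W^L(u_i, v_i) = (a_{i+1}, a_i]$. The base case uses $a_1 = \sup A^L$: Lemma~\ref{lem:W_is_interval} (transferred via Lemma~\ref{lem:left-winners-are-right-winners}) gives $W^L(u_1, v_1) = (\tfrac{u_1-1}{v_1}, s_1]$ with $s_1 \leq \sup A^L$, and $a_1 \in W^L(u_1, v_1)$ forces $s_1 = a_1$. For the inductive step, assume the claim at iteration $i$ and $(u_i, v_i) \neq (x_0, y_0)$. Then $W^L(u_i, v_i)$ is not the leftmost cell, so $a_{i+1} = \tfrac{u_i-1}{v_i}$ lies strictly above $\inf A^L$ and hence in $A^L$. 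Since each point of $A^L$ has a unique left winner, the left winning intervals partition $A^L$, and the cell $W^L(u_{i+1}, v_{i+1})$ containing $a_{i+1}$ cannot overlap $W^L(u_i, v_i) = (a_{i+1}, a_i]$, forcing its right endpoint to equal $a_{i+1}$.

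Termination and correctness now follow together: the sequence $a_i$ is strictly decreasing through distinct cells of the finite partition, so after finitely many iterations the algorithm visits the leftmost cell $W^L(x_0, y_0)$ and terminates via the condition in step~(3). By Lemmas~\ref{lem:A_winners_are_Omega_winners} and~\ref{lem:left-winners-are-right-winners}, every winning vector on $\Omega$ wins on some cell of the partition of $A^L$, and since the algorithm visits every cell exactly once, the collected set $\{(u_i, v_i)\}$ is precisely the set of winning vectors on $\Omega$.
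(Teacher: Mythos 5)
Your proof is correct and follows essentially the same inductive skeleton as the paper: show $W^L(u_i,v_i)=(a_{i+1},a_i]$ by induction, with the base case pinned down by $a_1=\sup A^L$ and the inductive step using disjointness of the cells, then conclude termination from finiteness and coverage of $A^L$.

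The one place you diverge is in justifying why the termination test in step (3) fires: the paper simply cites \cite{KSW} for the fact that $(x_0,y_0)$ wins in a neighborhood of the left endpoint of $A^L$, whereas you give a self-contained computation showing $(x_0,y_0)$ wins on the entire left edge of $\Omega$ (where $bx_0-ay_0=1$), and in particular at the corner $\left(\tfrac{x_0-1}{y_0},1\right)$, which then identifies $W^L(x_0,y_0)$ as the leftmost cell. Your computation of the return-time excess $\frac{vx_0-uy_0}{bu-av}$ checks out (using $ay_0=bx_0-1$), and the positivity claim is true: dividing the candidacy bound $bu-av\leq 1$ by $bv$ gives $\frac{u}{v}\leq \frac{a}{b}+\frac{1}{bv}$, while $\frac{x_0}{y_0}=\frac{a}{b}+\frac{1}{by_0}$, so $v\geq y_0$ forces $\frac{u}{v}\leq\frac{x_0}{y_0}$ with equality only when $v=y_0$ and $bu-av=1$, which pins $(u,v)=(x_0,y_0)$. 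However, your phrase ``the candidacy inequality forces $vx_0-uy_0>0$ directly'' for the case $v>y_0$ skips this algebra; the conclusion is not immediate from $b(u-x_0)\leq a(v-y_0)$ alone (the sign of $a$ is not controlled), and the clean route is the normalized comparison of $u/v$ and $x_0/y_0$ as above. Spelling that out would make the argument airtight. The trade-off between the two versions is the usual one: the paper's citation is shorter, while your version keeps the proof self-contained and avoids a dependency on an external result whose statement the reader might otherwise have to chase down.
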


\begin{proof}

First, we show that $W^L(u_i,v_i) = (a_{i+1}, a_i]$ for all $i$. We proceed by induction.

    \textit{Base case:} When $i = 1$, we know $(u_1, v_1)$ is the left winning vector at $a_1$, so $a_1 \in W^L(u_1,v_1)$. Since $\sup A^L=a_1$ and $W^L(u_1,v_1) \subseteq A$, we find that $\sup W^L(u_1,v_1)=a_1$ as well. Then Lemma~\ref{lem:W_is_interval} tells us $W^L(u_1,v_1) = \left( \frac{u_1-1}{v_1}, a_1 \right] = (a_2, a_1]$, completing the base case.

    \textit{Inductive step:} Suppose $W^L(u_{i-1},v_{i-1}) = (a_i, a_{i-1}]$. The left winning vector at $a_i$ is $(u_i, v_i)$, so $a_i \in W^L(u_i, v_i)$. Note that $(u_i, v_i) \neq (u_{i-1}, v_{i-1})$ since $(u_i, v_i)$ wins at a point that $(u_{i-1}, v_{i-1})$ does not win at. Then $W^L(u_i, v_i)$ must be disjoint from $W^L(u_{i-1}, v_{i-1}) = (a_i, a_{i-1}]$. Also, Lemma~\ref{lem:W_is_interval} tells us $W^L(u_i, v_i)$ is an interval. So $W^L(u_i, v_i)$ is an interval containing $a_i$ and disjoint from $(a_i, a_{i-1}]$, meaning $\sup W^L(u_i, v_i)=a_i$. Finally, Lemma~\ref{lem:W_is_interval} tells us $W^L(u_i, v_i) = \left( \frac{u_i - 1}{v_i}, a_i\right] = (a_{i+1}, a_i]$, completing the inductive step.

We know that there is a unique left winning vector at each point of $A^L$. Moreover, Lemmas~\ref{lem:A_winners_are_Omega_winners} and \ref{lem:left-winners-are-right-winners} tell us that these are the same as the winning vectors on $\Omega$, which by \cite{KSW}, we know to be a finite set. Since each step of the algorithm produces a distinct left winning vector on $A^L$, the algorithm must therefore terminate. Additionally, from \cite{KSW}, we know that the vector $(x_0,y_0)$ wins in a neighborhood of the left endpoint of $A^L$, so we know that the algorithm ends once we reach a point where $(x_0,y_0)$ wins. 

Letting $(u_n,v_n) =(x_0,y_0)$, we know that
    $$\bigcup_{i=1}^n \ W^L(u_i,v_i) 
    = 
    \bigcup_{i=1}^n \ (a_{i+1}, a_i] 
     = 
    (a_n, a_1] 
     = 
    \left( \frac{x_0 - 1}{y_0}, \frac{x_0-1}{y_0} + n\alpha \right]
     = 
    A^L.$$
Thus $\{(u_i,v_i)\}_{i=1}^n$ exhausts the set of winning vectors on $A^L$ (hence $\Omega$).
    
\end{proof}

\subsection{Finding the Winning Vector at a Point}\label{sec:winner_at_point}

To successfully apply the algorithm in Definition~\ref{def:algorithm}, we need to be able to find the left winning vector at each endpoint. 

Suppose we knew the winning vector lay in a specified bounded region. That region would contain finitely many holonomy vectors since $\Lambda$ is discrete. It would then take finitely many calculations to find all those holonomy vectors' coordinates, and to test them one by one to identify the winner. In Lemma~\ref{lem:region_with_winner}, we observe that for $(a,b)$ in either $\Omega$ or $\Omega^L$, knowing a single candidate winning vector at $(a,b)$ is often enough to generate such a bounded region.

\begin{lem} \label{lem:region_with_winner}
    Let $(u,v)\in\Lambda$ be a candidate (left) winner at $(a,b)\in\Omega^{(L)}$. Then the (left) winner at $(a,b)$ lies in the region
    \begin{align}
        R=S_{\Lambda}^{(L)}(a,b) \; \cap \; \left\{ (x,y) \in \bbR^2 \enspace \middle| \enspace vx - uy \geq 0 \right\}.
        \label{eq:winner-region}
    \end{align}
    This region is bounded if and only if $bu - av \neq 0$; in other words, as long as $(u,v)$ and $(a,b)$ have different slopes.
\end{lem}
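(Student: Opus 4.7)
The plan is to prove the two assertions of the lemma separately. For the containment claim, observe that the (left) winner at $(a,b)$ is by definition a (left) candidate, so it automatically lies in $S_{\Lambda}^{(L)}(a,b)$. It remains to check the half-plane condition $vx - uy \geq 0$. I would argue this via return times: on (left) candidates the return time $\mathcal{R}(a,b) = \frac{y}{b(bx-ay)}$ is positive, so with $a,b$ fixed, minimizing the return time is equivalent to maximizing the ratio $x/y$. Since $(u,v)$ is itself a (left) candidate with $v,y>0$, the winning property forces $\frac{x}{y} \geq \frac{u}{v}$, which upon clearing the positive denominators gives $vx - uy \geq 0$.

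For the boundedness dichotomy, I would give a direct parametric argument rather than a general-position case analysis. The strip $S_{\Lambda}^{(L)}(a,b)$ extends to infinity only along the direction $(a,b)$, since $b>0$ cuts off the opposite ray via $y>0$. I would parameterize the strip by fixing $p = bx - ay$ in its appropriate unit interval and writing $(x,y) = (p/b + ta,\; tb)$ for $t > 0$, so that the question reduces to whether $t$ can be arbitrarily large. A short computation gives
\[
    vx - uy = \frac{vp}{b} - t(bu - av),
\]
so the half-plane condition $vx - uy \geq 0$ forces an upper bound on $t$ precisely when $bu - av > 0$. Combined with the bounded range of $p$, this makes $R$ bounded. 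If instead $bu - av = 0$, the condition reduces to $vp/b \geq 0$, which is automatic, so $t$ is unconstrained and $R$ is unbounded.

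The main (mild) subtlety to track is the distinction between strict and left candidates: strict candidates always satisfy $bu - av > 0$, while left candidates allow the boundary value $bu - av = 0$. The parametric computation above treats both cases uniformly, so no separate casework is required. Moreover, the case $bu - av < 0$ never arises, since candidacy forces $bu - av \geq 0$. Geometrically, $bu - av = 0$ simply says the half-plane boundary $vx - uy = 0$ is parallel to the strip, and the sign conditions $v,b>0$ then make $(u,v)$ a positive multiple of $(a,b)$, so the half-plane contains the entire strip and $R$ equals the full (unbounded) half-strip. I do not anticipate any real obstacle beyond bookkeeping the inequalities carefully.
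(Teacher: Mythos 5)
Your proof is correct and ends up in the same place as the paper, but with a slightly different flavor for each half.

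For the containment claim, the paper goes directly via the ordering on slopes: the winner has slope no greater than the candidate $(u,v)$, which (with positive $y$-components) yields $vx-uy\geq 0$ upon clearing denominators. You reach the same inequality by first invoking the return-time formula $\mathcal{R}=\frac{y}{b(bx-ay)}$ and noting that minimizing it is equivalent to maximizing $x/y$. This is fine in spirit, but slightly imprecise at the boundary: for a left candidate with $bx-ay=0$ that formula is undefined (infinite), so ``the return time is positive'' does not strictly apply. The cleaner justification---which is also what the paper uses and what your argument really reduces to---is the $x/y$ ordering itself, which is defined for every candidate since $y>0$. The conclusion $vx-uy\geq 0$ is correct either way.

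For the boundedness dichotomy, your parametric argument is genuinely more explicit than the paper's. The paper argues geometrically (pointing to a figure) that the line $vx-uy=0$ is parallel to the strip exactly when $bu-av=0$, and that otherwise the intersection is a bounded quadrilateral. You instead parameterize the strip by $(x,y)=(p/b+ta,\,tb)$ with $p=bx-ay$ in a unit interval and $t>0$, compute $vx-uy=\frac{vp}{b}-t(bu-av)$, and read off directly that the half-plane condition bounds $t$ if and only if $bu-av>0$. This is a cleaner, calculation-based proof of the same fact, and your observation that $bu-av<0$ cannot occur (by candidacy) closes the case analysis tidily. No gaps; the argument goes through.
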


\begin{proof}
    Let $(u',v')$ be the (left) winning vector at $(a,b)$. In particular, $(u',v')$ is a candidate at $(a,b)$, so $(u',v') \in S^{(L)}_\Lambda(a,b)$ by definition. By assumption, we know $(u,v)$ is also a candidate at $(a,b)$. By the definition of the (left) winning vector at $(a,b)$, $(u',v')$ must have slope no greater than that of $(u,v)$, i.e.\ $\frac{u}{v}\leq\frac{u'}{v'}$. Since $(u,v),(u',v')\in S^{(L)}(a,b)$ implies $v,v'>0$, this means $vu' - uv' \geq 0$, so $(u',v') \in \left\{ (x,y) \in \bbR^2 \enspace \middle| \enspace vx - uy \geq 0 \right\}$. We conclude that $(u',v')\in R$. 
    
    Figure~\ref{fig:bounded_region} demonstrates when this region is bounded. The set $S^{(L)}_{\Lambda}(a,b)$ contains points in the upper half-plane between two non-horizontal parallel lines, $bx - ay = 0$ and $bx - ay = 1$. The line $vx - uy = 0$ intersects both those parallel lines if and only if $bu - av \neq 0$. When that is the case, intersecting the half-plane under $vx - uy = 0$ with $S_{\Lambda}(a,b)$ produces a bounded region. So $R$ 
    is bounded if and only if $bx - ay \neq 0$.
\end{proof}

    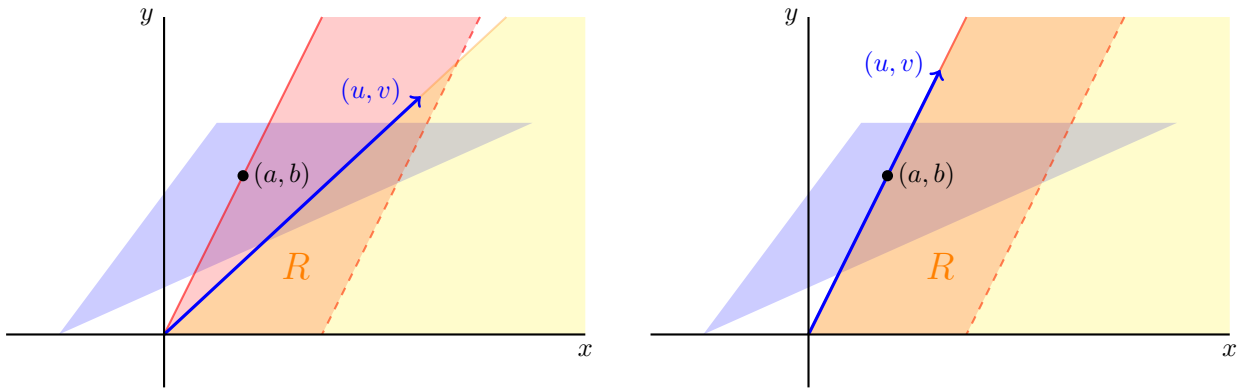
\begin{figure}%[b]
    \begin{tikzpicture}[scale=0.7]
        \fill[fill=blue, opacity=0.2] (-2,0)--(1,4)--(7,4);
        \fill[fill=red, opacity=0.2] (0,0)--(3,6)--(6,6)--(3,0);
        \draw[thick,red,opacity=0.6] (0,0)--(3,6);
        \draw[thick,dashed,red,opacity=0.6] (6,6)--(3,0);
        \fill[fill=yellow, opacity=0.2] (6.5,6)--(0,0)--(8,0)--(8,6);
        \draw[thick,orange,opacity=0.3] (6.5,6)--(0,0);
        \draw[blue,->,very thick] (0,0)--(4.875,4.5);
        \node[blue,left] at (4.7,4.6) {\small $(u,v)$};
        \fill[black] (1.5,3) circle (3pt);
        \node[right] at (1.5,3) {\small $(a,b)$};
        \node[below] at (8,0) {\small $x$};
        \node[left] at (0,6) {\small $y$};
        \draw[thick] (-3,0)--(8,0);
        \draw[thick] (0,-1)--(0,6);
        \node[orange] at (2.5,1.3) {\Large $R$};
    \end{tikzpicture}
    \hspace{1em}
    \begin{tikzpicture}[scale=0.7]
        \fill[fill=blue, opacity=0.2] (-2,0)--(1,4)--(7,4);
        \fill[fill=red, opacity=0.2] (0,0)--(3,6)--(6,6)--(3,0);
        \draw[thick,red,opacity=0.6] (0,0)--(3,6);
        \draw[thick,dashed,red,opacity=0.6] (6,6)--(3,0);
        \fill[fill=yellow, opacity=0.2] (3,6)--(0,0)--(8,0)--(8,6);
        \draw[blue,->,very thick] (0,0)--(2.5,5);
        \node[blue,left] at (2.4,5.1) {\small $(u,v)$};
        \fill[black] (1.5,3) circle (3pt);
        \node[right] at (1.5,3) {\small $(a,b)$};
        \node[below] at (8,0) {\small $x$};
        \node[left] at (0,6) {\small $y$};
        \draw[thick] (-3,0)--(8,0);
        \draw[thick] (0,-1)--(0,6);
        \node[orange] at (2.5,1.3) {\Large $R$};
    \end{tikzpicture}
    \caption{The red strip is $S^L_\Lambda(a,b)$ and the yellow sector between the $x$-axis and $(u,v)$ contains all holonomy vectors with slope shallower than that of $(u,v)$. If $(u,v)$ is in the interior of the strip, i.e.\ if $bu - av \neq 0$, then their orange intersection, $R$, is bounded (left). When $bu - av = 0$, this intersection is not bounded (right). Note that although we have overlaid $\Omega$ and $(a,b)$ in the figure, we are searching for holonomy vectors that might beat $(u,v)$ in the orange region in the plane of $\Lambda$.}
    \label{fig:bounded_region}
\end{figure}

Notice that $(u,v)$ lies on the left edge of the region $R$ defined in \eqref{eq:winner-region},
so if there is any holonomy vector in $R$ of slope different from $(u,v)$, then it \emph{will} beat $(u,v)$ at $(a,b)$. Therefore, if $(u,v)$ is the smallest holonomy vector of its slope, then proving that $R$ is empty of any holonomy vectors of different slope is equivalent to proving that $(u,v)$ wins at $(a,b)$.

If the set in \eqref{eq:winner-region} is not bounded, then it is an infinite strip and $(u,v)$ lies on the edge of this strip. In this case, it can be harder to prove that the set does not contain any holonomy vectors that might win against $(u,v)$. One approach is to shear the translation surface, which in turn shears the unbounded strip, and choose the shear so that the sheared strip is vertical. This vertical strip has a known width. Then, the question of whether the (sheared) strip contains any holonomy vectors in its interior boils down to whether there exist any (sheared) holonomy vectors whose $x$-components are strictly less than that width. For an example of this approach in practice, see our proof of the slope gap distribution of the double heptagon translation surface in the next section.

\section{The Double Heptagon}\label{sec:dbl_heptagon}

In this section, we apply the algorithm from Section~\ref{sec:algorithm} to the double heptagon translation surface to calculate its renormalized slope gap distribution.

\subsection{The Double Heptagon and Staircase Representation}\label{sec:7gon_setup} 
Let $(X,\omega)$ denote the double heptagon translation surface formed by two regular heptagons with unit side lengths, where same-color sides are identified as shown in Figure~\ref{fig:double-heptagon}. This is a genus $3$ translation surface with a single cone point of total angle $10\pi$.

\begin{figure}[h]
    \begin{tikzpicture}[scale =1.3]
    \draw [black, ultra thick] (0,0) --(1,0);
    \draw [orange!80, ultra thick] (1,0) --(1.623,0.782);
    \draw [orange!80, ultra thick] (0,0) --(-0.62,-0.765);
    \draw [green!60, ultra thick] (-0.62,-0.765) --(-0.4,-1.757);
    \draw [green!60, ultra thick] (1.623,0.782) -- (1.401,1.757);
    \draw [red!80, ultra thick] (1.401,1.757) -- (0.5,2.191);
    \draw [red!80, ultra thick] (-0.4,-1.757) --(0.5,-2.191);
    \draw [blue!65, ultra thick] (0.5,-2.191) --(1.4,-1.757);
    \draw [blue!65, ultra thick] (0.5,2.191) --(-.401,1.757);
    \draw [yellow, ultra thick] (-.401,1.757) --(-.623,0.782);
    \draw [yellow, ultra thick] (1.623,-.782) --(1.4,-1.757);
    \draw [violet!80, ultra thick] (1.623,-.782) --(1,0);
    \draw [violet!80, ultra thick] (-.623,.782) --(0,0);
    \end{tikzpicture}
    \caption{The double heptagon surface. Sides of the same color are identified, and all vertices are idenitified, representing a single cone point of angle $10\pi$.}
    \label{fig:double-heptagon}
\end{figure}
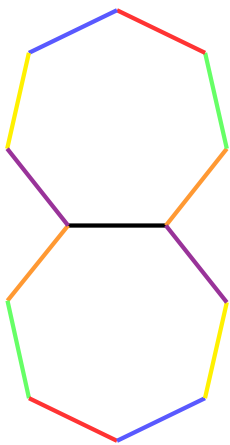

In \cite{Veech89}, Veech showed that the Veech group of this surface is given by $\slxo=\langle Q,P\rangle$, where
\[
Q = \begin{pmatrix} \cos\frac{\pi}{7}&-\sin\frac{\pi}{7}\\\sin\frac{\pi}{7}&\cos\frac{\pi}{7}\end{pmatrix},
\qquad\qquad
P=\begin{pmatrix} 1&2\cot\frac{\pi}{7}\\0&1\end{pmatrix}.
\]
This is isomorphic to the $(2,7,\infty)$ triangle group and has one cusp corresponding to the parabolic generator $P$, with $C=\Id$, $\alpha = 2\cot(\pi/7)$, and $(x_0,y_0) = (\cos(\pi/7),\sin(\pi/7))$. We also see that $-\Id\in \slxo$, so the Poincar\'e section for $(X,\omega)$ has just one component, given by 
\[
\Omega = \left\{ (a,b) \in \bbR^2 \mmid 0 < b \leq 1, \; \cot\left(\frac{\pi}{7}\right) b - \csc\left(\frac{\pi}{7}\right) \leq a < \; 3\cot\left(\frac{\pi}{7}\right) b - \csc\left(\frac{\pi}{7}\right) \right\}.
\]

In order to use the method from Section~\ref{sec:winner_at_point}, it will be helpful to know an easily computable discrete set $L$ containing all of the possible winning vectors on $\Omega$. To get such a superset, we note that $(X,\omega)$ admits a ``staircase'' presentation which is cut-translate-paste equivalent to the original double heptagon; see Figure~\ref{fig:staircase_surface}.
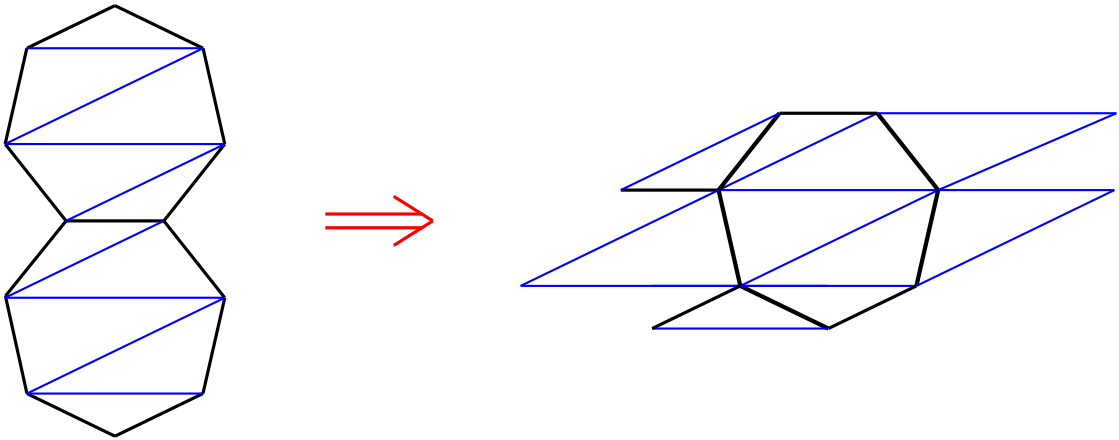
\begin{figure}%[b]
\centering
\begin{subfigure}[c]{0.25\textwidth}
    \begin{tikzpicture}[scale =1.3]
    \draw [black,very thick] (0,0) --(1,0);
    \draw [black, very thick] (1,0) --(1.623,0.782);
    \draw [black, very thick] (0,0) --(-0.62,-0.765);
    \draw [black, very thick] (-0.62,-0.765) --(-0.4,-1.757);
    \draw [black, very thick] (1.623,0.782) -- (1.401,1.757);
    \draw [black, very thick] (1.401,1.757) -- (0.5,2.191);
    \draw [black, very thick] (-0.4,-1.757) --(0.5,-2.191);
    \draw [black, very thick] (0.5,-2.191) --(1.4,-1.757);
    \draw [black, very thick] (0.5,2.191) --(-.401,1.757);
    \draw [black, very thick] (-.401,1.757) --(-.623,0.782);
    \draw [black, very thick] (1.623,-.782) --(1.4,-1.757);
    \draw [black, very thick] (1.623,-.782) --(1,0);
    \draw [black, very thick] (-.623,.782) --(0,0);
    %%%diagonals
    \draw [blue,  thick] (-.401,1.757) --(1.401,1.757);
    \draw [blue,  thick] (-.623,.782) --(1.401,1.757);
    \draw [blue,  thick] (-.623,.782) --(1.623,.782);
    \draw [blue,  thick] (0,0) --(1.623,.782);
    \draw [blue,  thick] (1,0) --(-0.623,-.782);
    \draw [blue,  thick] (1.623,-.782) --(-0.623,-.782);
    \draw [blue,  thick] (1.623,-.782) --(-0.4,-1.757);
    \draw [blue,  thick] (1.4,-1.757) --(-0.4,-1.757);
    \end{tikzpicture}
\end{subfigure}
    %%%arrow
\begin{subfigure}[c]{0.15\textwidth}
\begin{tikzpicture}[scale =1.3]
    \draw [red, very thick] (3,-0.07) --(4,-0.07);
    \draw [red, very thick] (3,0.07) --(4,0.07);
    \draw [red, very thick] (4.1,0) --(3.7,0.25);
    \draw [red, very thick] (4.1,0) --(3.7,-0.25);
\end{tikzpicture}
\end{subfigure}
    %%%second dia
\begin{subfigure}[c]{0.45\textwidth}
    \begin{tikzpicture}[scale =1.3]
    \draw [blue,  thick] (8,0) --(6.377,-.782);
    \draw [blue,  thick] (8.623,-.782) --(6.377,-.782);
    \draw [blue,  thick] (8.623,-.782) --(6.6,-1.757);
    \draw [blue,  thick] (8.4,-1.757) --(6.6,-1.757);
    \draw [black,  very thick] (7.5,-2.191) --(8.4,-1.757);
    \draw [black, ultra thick] (7.5,-2.191) --(6.599,-1.757);
    \draw [black, ultra thick] (6.599,-1.757) --(6.377,-0.782);
    \draw [black, ultra thick] (8.623,-.782) --(8.4,-1.757);
    \draw [black, ultra thick] (8.623,-.782) --(8,0);
    \draw [black, ultra thick] (6.377,-.782) --(7,0);
    \draw [black, very thick] (8,0) --(7,0);
    %%%extra top left
    \draw [black, very thick] (6.377,-.782) --(5.377,-.782);
    \draw [blue,  thick] (7,0) --(5.377,-.782);
    %%%middle left
    \draw [blue,  thick] (4.353,-1.757) --(6.377,-.782);
    \draw [blue,  thick] (6.599,-1.757) --(4.353,-1.757);
    %%%bottom left
    \draw [blue,  thick] (7.5,-1.757) --(5.698062264,-1.757);
    \draw [blue,  thick] (7.5,-2.190883739) --(5.698062264,-2.190883739);
    \draw [black, very thick] (6.599,-1.757) --(5.698062264,-2.190883739);
    %%top right
    \draw [blue,  thick] (8,0) --(10.445041868,0);
    \draw [blue,  thick] (8.623,-.782) --(10.445041868,0);
    %%%middle right
    \draw [blue,  thick] (10.42493774,-.782) --(8.623,-.782);
    \draw [blue,  thick] (10.42493774,-.782) --(8.4,-1.757);
    \end{tikzpicture}
\end{subfigure}
    \caption{Staircase presentation of the double heptagon translation surface.}\label{fig:staircase_surface}
\end{figure}

Viewing the staircase as three stacked parallelograms, $(x_0,y_0)$ is the left edge of the bottom parallelogram. Each non-horizontal side is parallel to $(x_0,y_0)$. 

Rather than working with the slanted staircase, we will work with a rectilinear one. The horizontal shear
\[
M = \begin{pmatrix}
1 & -\cot\left(\frac{\pi}{7}\right) \\
0 & \csc\left(\frac{\pi}{7}\right) 
\end{pmatrix}
\]
brings $(x_0, y_0)$ to the vertical vector $(0,1)$. The segments parallel to $(x_0,y_0)$ become vertical, too, while horizontal segments remain horizontal. That means $M\cdot (X,\omega)$ is a rectilinear staircase, as shown in Figure~\ref{fig:rectilinear_surface}.
\begin{figure}%[h]
    \begin{tikzpicture}[scale =1.3]
    
    % vertices of the outline
    \coordinate (a) at (0,0);
    \coordinate (b) at (1.8019,0);
    \coordinate (c) at (1.8019,3.247);
    \coordinate (d) at (0,3.247);
    \coordinate (e) at (0,5.049);
    \coordinate (f) at (-3.247,5.049);
    \coordinate (g) at (-3.247,3.247);
    \coordinate (h) at (-2.247,3.247);
    \coordinate (i) at (-2.247,1);
    \coordinate (j) at (0,1);

    % extra vertices
    \coordinate (k) at (1.8019,1);
    \coordinate (l) at (-2.247,5.049);

    % outline of shape
    \draw[-] (a) to (b);
    \draw[-] (b) to (c);
    \draw[-] (c) to (d);
    \draw[-] (d) to (e);
    \draw[-] (e) to (f);
    \draw[-] (f) to (g);
    \draw[-] (g) to (h);
    \draw[-] (h) to (i);
    \draw[-] (i) to (j);
    \draw[-] (j) to (a);

    % dashed lines
    \draw[dashed] (j) to (k);
    \draw[dashed] (j) to (d);
    \draw[dashed] (d) to (h);
    \draw[dashed] (h) to (l);

    % labels for lengths
    \node (l1) at (-2.8,2.92) {$l_1 = 1$};
    \node (l2) at (0.9,-0.4) {$l_2 = 2\cos(\pi/7)$};
    \node (l3) at (-1.25,0.6) {$l_3 = \frac{1}{2}\csc(\pi/14)$};

    \end{tikzpicture}
    \caption{The shear $M$ applied to the double heptagon translation surface. All horizontal and vertical edges have lengths $l_1, l_2,$ or $l_3$.}\label{fig:rectilinear_surface}
\end{figure}
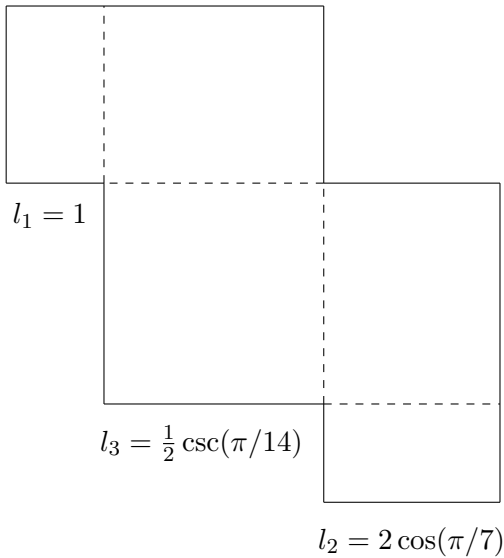

When we apply $M$ to the surface, we also apply it to its holonomy vectors; that is, the set of holonomy vectors on $M\cdot (X,\omega)$ is $M\cdot \Lambda$. We care about the subset of $\Lambda$ that could contain winning vectors, meaning the set of holonomy vectors with positive $y$-coordinate and positive slope shallower than that of $(x_0,y_0)$ (note that since $(x_0,y_0)$ is known to win in a neighborhood of the left endpoint of $A$, all other winning vectors must have smaller slope). Applying $M$ to this set gives us the following set:
\begin{defn}
     Let $\mathcal{L}$ be the set of holonomy vectors on $M\cdot (X,\omega)$ in the first quadrant of $\R^2$.
\end{defn}

While $\mathcal{L}$ is hard to describe explicitly, it admits a discrete superset with a simple explicit description. The horizontal and vertical distances between vertices of $M\cdot (X,\omega)$ are integer sums of three distinct lengths, $l_1 = 1, l_2 = 2\cos(\pi/7)$, and $l_3 = 
\frac{1}{2}\csc(\pi/7)$. This means a holonomy vector on $M\cdot (X,\omega)$ must have $x$- and $y$-components that are integer sums of $l_1,l_2,$ and $l_3$. Thus elements of $\mathcal{L}$ belong to the following set:
\begin{defn}
    Let $L = (\bbN \cdot l_1 + \bbN \cdot l_2 + \bbN \cdot l_3)^2$. 
\end{defn}

This is a discrete superset of $\mathcal{L}$ that be easily generated by a computer program for all vectors up to a certain length. We will use this to search through possible holonomy vectors that could win at some point on $\Omega$.

\subsection{Running the Algorithm}

We apply the algorithm described in Definition~\ref{def:algorithm} to obtain the following exhaustive list of left winning vectors on $A^L$. By the results of Section~\ref{sec:winners}, we know that these are all of the winning vectors on $\Omega$.

\begin{table}%[h!]
    \centering
        \setlength{\tabcolsep}{5pt}
        \renewcommand{\arraystretch}{2.5}
        % ^these two lines add space between the table's rows
        \begin{tabular}{ |c|c|c| }
            \hline
            Point in $A^L$ & Left winner & $M$ applied to left winner\\
            \hline
            $a_1=\frac{3\cos\left(\frac{\pi}{7}\right) - 1}{\sin\left(\frac{\pi}{7}\right)}$
            &
            $w_1 = \left(2 + 3\cos\left(\frac{2\pi}{7}\right), \sin\left(\frac{2\pi}{7}\right)\right)$
            &
            $(l_3,l_2)$
            \\ 

            $a_2=\frac{1 + 3\cos\left(\frac{2\pi}{7}\right)}{\sin\left(\frac{2\pi}{7}\right)}$
            &
            $w_2 = \left(4\cos\left(\frac{\pi}{7}\right) + 3\cos\left(\frac{3\pi}{7}\right), \sin\left(\frac{3\pi}{7}\right)\right)$
            &
            $(l_3,l_3)$
            \\ 
            
            $a_3=\frac{4\cos\left(\frac{\pi}{7}\right) + 3\cos\left(\frac{3\pi}{7}\right) - 1}{\sin\left(\frac{3\pi}{7}\right)}$
            &
            $w_3 = \left(4\cos\left(\frac{\pi}{7}\right) + \cos\left(\frac{3\pi}{7}\right), \sin\left(\frac{3\pi}{7}\right)\right)$
            &
            $(l_2,l_3)$
            \\ 
            
            $a_4=\frac{4\cos\left(\frac{\pi}{7}\right) + \cos\left(\frac{3\pi}{7}\right) - 1}{\sin\left(\frac{3\pi}{7}\right)}$
            &
            $w_4 = \left(2 + \cos\left(\frac{2\pi}{7}\right), \sin\left(\frac{2\pi}{7}\right)\right)$
            &
            $(l_1,l_2)$
            \\

            $a_5=\frac{1 + \cos\left(\frac{2\pi}{7}\right)}{\sin\left(\frac{2\pi}{7}\right)}$
            &
            $w_5 = \left(\cos\left(\frac{\pi}{7}\right), \sin\left(\frac{\pi}{7}\right)\right)$
            &
            $(0,l_1)$
            \\
            \hline
        \end{tabular}
    \caption{The left winners on $A^L$, along with their corresponding vectors in $M\cdot \Lambda$. Note that $w_5=(x_0,y_0)$, which tells us that we can end the algorithm.}
    \label{fig:winner-table}
\end{table}

\begin{lem} \label{lem:7-gon_winners_at_points}
    The vectors $w_1, \ldots, w_5$ given in Table~\ref{fig:winner-table} are the left winners at the points $a_1,\dots, a_5\in A^L$, respectively, which are the right endpoints of their winning intervals.
\end{lem}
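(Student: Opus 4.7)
My plan is to verify each of the five cases in Table~\ref{fig:winner-table} by applying the strategy described in Section~\ref{sec:winner_at_point}. Write $w_i = (u_i, v_i)$. For each $i \in \{1, \ldots, 5\}$, I would proceed in three steps: first, identify $w_i$ as an actual holonomy vector on $(X,\omega)$; second, check that it is a candidate left winner at $a_i$; and third, apply Lemma~\ref{lem:region_with_winner} to confine the true left winner at $a_i$ to a small region. The identification $w_i \in \mathcal{L}$ uses the rectilinear presentation in Figure~\ref{fig:rectilinear_surface}, since the third column of Table~\ref{fig:winner-table} exhibits $M w_i$ as the displacement between two vertices of the staircase. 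Candidacy reduces to the algebraic inequality $0 \leq u_i - a_i v_i < 1$, verifiable via elementary identities for $\sin(k\pi/7)$ and $\cos(k\pi/7)$. Lemma~\ref{lem:region_with_winner} then confines any left winner at $a_i$ to the region
\[
R_i = S_\Lambda^L(a_i, 1) \cap \{(x,y) \in \bbR^2 \mid v_i x - u_i y \geq 0\}.
\]

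For $i \in \{1,2,3,4\}$, a short trigonometric calculation yields $u_i - a_i v_i \neq 0$, so $R_i$ is bounded by Lemma~\ref{lem:region_with_winner}. I would then enumerate the finite intersection $L \cap R_i$, where $L = (\bbN l_1 + \bbN l_2 + \bbN l_3)^2$ is the explicit discrete superset of $\mathcal{L}$ from Section~\ref{sec:7gon_setup}, discard any points not actually realized as holonomy vectors in $\mathcal{L}$ (by direct inspection of the staircase), and verify that each remaining vector has slope at least that of $w_i$, with $w_i$ the unique shortest vector attaining the minimum slope.

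The main obstacle is the case $i = 5$. Here $w_5 = (x_0, y_0)$ is parallel to the vector $(a_5, 1)$ because of the identity $(1 + \cos(2\pi/7))/\sin(2\pi/7) = \cot(\pi/7)$, so $u_5 - a_5 v_5 = 0$ and $R_5$ is an unbounded strip. To handle this, I would apply the shear $M$ from Section~\ref{sec:7gon_setup}: since $M w_5 = (0, 1)$, a direct computation transforms $M \cdot R_5$ into the vertical strip $\{(x,y) \mid 0 \leq x < 1,\ y > 0\}$ in the plane of $M \cdot \Lambda$. Because $M \cdot \Lambda \subseteq L$, it then suffices to verify that no nonzero nonnegative integer combination of $l_1 = 1$, $l_2 = 2\cos(\pi/7) > 1$, and $l_3 = \tfrac{1}{2}\csc(\pi/14) > 1$ lies in $(0, 1)$, which is immediate from the sizes of the generators.

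Once each $w_i$ is confirmed as the left winner at $a_i$, the statement that $a_i$ is the right endpoint of the left-winning interval $W^L(w_i)$ is forced by the inductive structure used in the proof of Theorem~\ref{thm:alg_finds_winners}, so no separate argument is needed.
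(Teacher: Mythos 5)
Your overall strategy mirrors the paper's: reduce each case to showing that the region $R_i$ from Lemma~\ref{lem:region_with_winner} contains no holonomy vector that would beat $w_i$, use the explicit discrete superset $L$ to enumerate finitely many candidates in the four bounded cases $R_1,\ldots,R_4$, and handle the unbounded $R_5$ by applying the shear $M$ and analyzing the resulting vertical strip. This is exactly the structure of the paper's proof.

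There is, however, a small but genuine gap in your treatment of $R_5$. After shearing, $M \cdot R_5$ is the strip $\{(x,y) : 0 \leq x < 1,\ y > 0\}$, and you argue that no nonzero nonnegative integer combination of $l_1, l_2, l_3$ lies in $(0,1)$. That rules out vectors in $M\cdot\Lambda$ whose $x$-component is strictly positive, but the strip also contains the ray $x = 0$, and you never address vertical holonomy vectors, i.e.\ those whose $M$-image has $x$-component exactly $0$. Such vectors have the same slope as $w_5 = (x_0, y_0)$, so they could in principle win over $w_5$ at $a_5$ by being shorter. You need the extra observation that $(x_0, y_0)$ is, by construction, the shortest holonomy vector of its slope, so no vertical vector beats it. The paper makes this step explicit; your proof as written leaves it out, and without it the $R_5$ case is incomplete.
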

\begin{proof}
    First, a quick calculation shows that each of our proposed winners is a candidate left winning vector at their corresponding points $a_i\in A$. That means we can use Lemma~\ref{lem:region_with_winner} to prove these five vectors win. This amounts to showing that the five regions $R_1, \ldots, R_5$ as defined \eqref{eq:winner-region} each contain no holonomy vectors in their interiors. Equivalently, they should contain no such elements of $M^{-1}\cdot \mathcal{L}$. Note that the proposed winners themselves are always in these regions, but for convenience, we will still say they are ``empty'' if they contain no holonomy vectors of different slope, i.e.\ no vectors in their interiors.
    
    For $R_1$ through $R_4$, our strategy is as follows: Using a Python program,\footnote{Code for this program can be found in Appendix~\ref{sec:Python-L}.} 
    we find all vectors in $(M\cdot R_i) \cap L$. Then, we manually check whether each of those vectors is actually a holonomy vector, i.e.\ actually in $\mathcal{L}$. If none are, then $(M\cdot R_i) \cap \mathcal{L}$ is empty, so $R_i \cap (M^{-1}\cdot \mathcal{L})$ is empty, as desired.

    $R_1$: There are no vectors in $(M\cdot R_1) \cap \mathcal{L}$ apart from $w_1$ itself. Hence $w_1$ wins at $\frac{3\cos(\pi/7) - 1}{\sin(\pi/7)}$.

    $R_2$: The following vectors lie in $(M\cdot R_2) \cap L$, with $Mw_2$ colored in red:
    \begin{center}
    $\begin{array}{ |c|c| }
        \hline
        \text{Vector in $(M\cdot R_2) \cap L$} & \text{Slope} \\
        \hline
        (l_3,l_2) & 0.802 \\
        (l_3,2l_1) & 0.890 \\
        (2l_1,l_2) & 0.901 \\
        \textcolor{red}{(l_3,l_3)} & 1 \\
        (l_2,l_2) & 1 \\
        (l_1,l_1) & 1 \\
        (l_1+l_3,l_1+l_3) & 1 \\
        (l_1+l_2,l_1+l_2) & 1 \\
        (2l_1,2l_1) & 1 \\
        (3l_1,3l_1) & 1 \\
        \hline
    \end{array}$
    \end{center}
    The first three vectors would beat $Mw_2$ if they were holonomy vectors, since they have shallower slope. However, manually checking them at each vertex on the rectilinear surface, we find that none of them are holonomy vectors. Of the remaining vectors, all of them are either not holonomy vectors or are longer than $Mw_2$, meaning they do not beat it. Hence $(M\cdot R_2) \cap \mathcal{L}$ is empty, as desired.

    $R_3, R_4$: These follow similar arguments to $R_2$. They are each bounded regions, and we manually check vectors of $(M\cdot R_i)\cap L$ to find that $(M\cdot R_i)\cap \mathcal{L}$ is indeed empty. 
    
    $R_5$: Unlike the other four regions, $R_5$ is not bounded---it is an exceptional case to Lemma~\ref{lem:region_with_winner}, which prevents our code from working on the region. Instead, we'll show that $(M\cdot R_5) \cap \mathcal{L}$ is empty another way. From Lemma~\ref{lem:region_with_winner}, we have that $R_5 = S_\Lambda^L\left(\frac{1 + \cos(2\pi/7)}{\sin(2\pi/7)},1\right)$ is a strip of width $\csc(\pi/7)$ parallel to $(x_0,y_0)$. Then $M$ maps $R_5$ to the vertical strip $\{(x,y)\in\R^2 \mid 0 \leq x < 1, \ y>0\}$. Thus vectors in $(M\cdot R_5) \cap L$ have $x$-component in $[0,1)$. The smallest $x$-component of vectors in $L$ is $l_1 = 1$, just outside the allowed range. 
    So the only possible holonomy vectors in $M\cdot R_5$ are vertical, which means they have the same slope as $Mw_5$. 
    However, by definition, there are no holonomy vectors with the same slope as $w_5 = (x_0,y_0)$ that are shorter than it, i.e.\ that beat it. So $(M\cdot R_5) \cap \mathcal{L}$ is also empty, as desired.
\end{proof}

\begin{figure}%[h]
    \begin{tikzpicture}[scale =1.3]
    
    % vertices of the outline
    \coordinate (a) at (0,0);
    \coordinate (b) at (1.8019,0);
    \coordinate (c) at (1.8019,3.247);
    \coordinate (d) at (0,3.247);
    \coordinate (e) at (0,5.049);
    \coordinate (f) at (-3.247,5.049);
    \coordinate (g) at (-3.247,3.247);
    \coordinate (h) at (-2.247,3.247);
    \coordinate (i) at (-2.247,1);
    \coordinate (j) at (0,1);

    % extra vertices
    \coordinate (k) at (1.8019,1);
    \coordinate (l) at (-2.247,5.049);

    % outline of shape
    \draw[-] (a) to (b);
    \draw[-] (b) to (c);
    \draw[-] (c) to (d);
    \draw[-] (d) to (e);
    \draw[-] (e) to (f);
    \draw[-] (f) to (g);
    \draw[-] (g) to (h);
    \draw[-] (h) to (i);
    \draw[-] (i) to (j);
    \draw[-] (j) to (a);

    % holonomy vectors
    \draw[->] [blue, line width=1.4pt] (a) to (j);
    \draw[->] [blue, line width=1.4pt] (j) to (c);
    \draw[->] [blue, line width=1.4pt] (i) to (d);
    \draw[->] [blue, line width=1.4pt] (g) to (l);
    \draw[->] [blue, line width=1.4pt] (h) to (e);

    % labels for holonomy vectors 
    \node (w5) at (0.57,0.5) [text=blue] {$Mw_5$};
    \node (w4) at (-2.14,4.2) [text=blue] {$Mw_4$};
    \node (w3) at (0.32,2.2) [text=blue] {$Mw_3$};
    \node (w2) at (-1.5,2.45) [text=blue] {$Mw_2$};
    \node (w1) at (-0.7,3.9) [text=blue] {$Mw_1$};

    \end{tikzpicture}
    \caption{$M$ applied to the five winning vectors.}\label{fig:winning_vectors}
\end{figure}
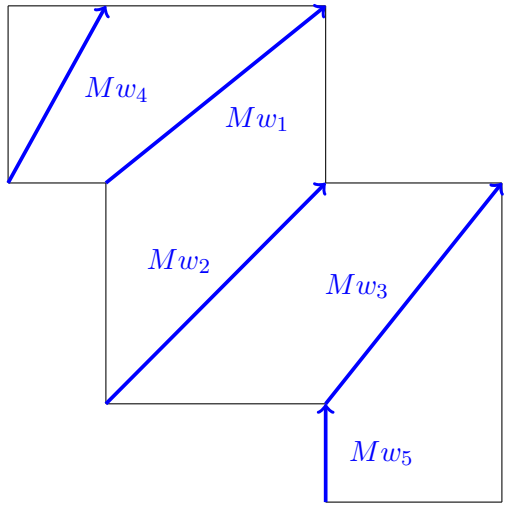

\subsection{Slope Gap Distribution}

Now that we have the complete list of winning vectors on $\Omega$, we find the winner at each point by plotting the candidacy strips for $w_1,\dots, w_5$ and taking the vector of least slope as the winner at any point where two or more strips overlap. This produces the subdivision shown in Figure~\ref{fig:subdivision}.

\begin{figure}%[h]
    \includegraphics[width=0.9\textwidth]{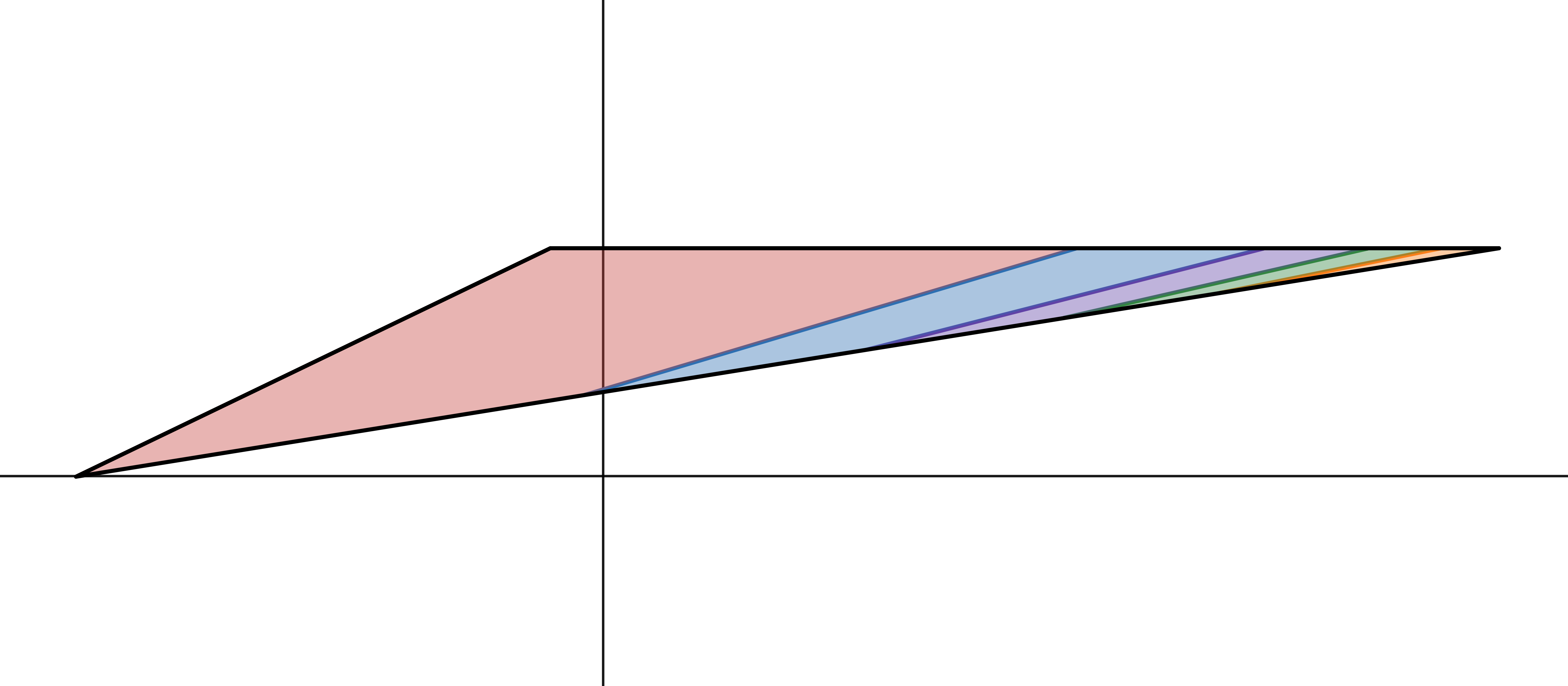}
    \caption{Subdivision of the transversal for the double heptagon into regions corresponding to different winning vectors. The vector $w_1$ wins on the orange region, $w_2$ wins on the green region, $w_3$ wins on the purple region, $w_4$ wins on the blue region, and $w_5$ wins on the red region.}
    \label{fig:subdivision}
\end{figure}

With the winning vectors and subdivision of $\Omega$ in hand, we now know the return time function at every point of the transversal. This allows us to explicitly compute the slope gap distribution for this surface as described in Section~\ref{sec:finding-pdf}. Implementing this calculation in Mathematica produces the following distribution:

\begin{thm}\label{thm:double-heptagon-sgd}
    The renormalized slope gap distribution for the double heptagon is the piecewise real analytic function\footnote{A formula for the function can be found in Appendix~\ref{sec:formula-appendix}. For the Mathematica code used to generate the graph, please contact the fourth named author.} with $13$ points of non-analyticity shown in Figure~\ref{fig:7gon_slope_gap_dist-2}.
\begin{figure}[h!]
     \centering
     \includegraphics[width=0.6\linewidth]{double-heptagon-normalized-pdf.pdf}
     \caption{The slope gap distribution of the regular double heptagon translation surface.}
     \label{fig:7gon_slope_gap_dist-2}
\end{figure}
\end{thm}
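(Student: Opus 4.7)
The plan is to assemble the pieces already developed in Sections~\ref{sec:prelims}, \ref{sec:winners}, and \ref{sec:7gon_setup} into the explicit cumulative distribution function $F$ and then differentiate. By Lemma~\ref{lem:7-gon_winners_at_points}, the vectors $w_1,\dots,w_5$ together with their winning intervals on $A^L$ exhaust the set of winning vectors on $\Omega$. The first step is therefore to upgrade this ``edge data'' to a full subdivision of $\Omega$: for each $w_i=(x_i,y_i)$ I would plot the candidacy strip $S_\Omega(x_i,y_i)\cap\Omega$ and, wherever two or more strips overlap, assign the point to the $w_i$ of least slope, as guaranteed by the lemma immediately preceding Section~\ref{sec:winners}. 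Since $\Omega$ is a bounded triangle and there are only five candidacy strips to intersect, the resulting partition $\Omega=\bigsqcup_{j=1}^{k}\Delta(w_j)$ is a finite collection of convex polygons whose vertices are explicit algebraic expressions in $\cos(\pi/7),\sin(\pi/7)$, etc.; I would catalog these vertices carefully, because they will govern where non-analyticities appear.

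Next, on each region $\Delta(w_i)$ the return-time function is the explicit rational expression
\[
\mathcal{R}_i(a,b)=\frac{y_i}{b(bx_i-ay_i)},
\]
so the sublevel set $\{\mathcal{R}_i\leq t\}$ is one side of a hyperbola $b(bx_i-ay_i)=y_i/t$ in the $(a,b)$ plane. By the formula in Section~\ref{sec:finding-pdf},
\[
F(t)=\frac{1}{Z}\sum_{i=1}^{5}\sum_{j}\mathrm{Leb}\bigl(\{(a,b)\in\Delta_j(w_i)\mid 0<\mathcal{R}_i(a,b)\leq t\}\bigr),
\]
where $Z$ is the normalizing constant making $\int f=1$, and the inner sum ranges over the convex polygons on which $w_i$ wins. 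I would compute each of these areas as a piecewise-analytic function of $t$ by parametrizing the relevant hyperbolic arc in $(a,b)$ and integrating. The branches of this piecewise function switch precisely when the level hyperbola $\mathcal{R}_i=t$ either enters a new edge of $\Delta(w_i)$ or sweeps across a vertex of the subdivision.

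The main technical step is therefore a careful bookkeeping of ``critical times''. For each polygon $\Delta_j(w_i)$, I would enumerate the values of $t$ at which the hyperbola $b(bx_i-ay_i)=y_i/t$ first touches each vertex of that polygon; the union of these critical times across all polygons is exactly the set of candidate points of non-analyticity for $F'$. I expect some of these candidates to coincide or to be removable (i.e.\ the two adjacent analytic branches agree to all orders), so the claim that there are exactly $13$ genuine points of non-analyticity will follow from an explicit case-by-case comparison of one-sided derivatives at each critical time. This comparison, together with the integrations producing the formula for $F$, is the only step that is genuinely computational rather than conceptual, and I would delegate it to Mathematica.

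Finally, $f(t)=F'(t)$ is obtained by differentiating each analytic branch and verifying continuity at the transitions, and $Z$ is fixed by $\int_0^\infty f\,dt=1$. The output of this pipeline is exactly the piecewise real analytic density with $13$ kinks displayed in Figure~\ref{fig:7gon_slope_gap_dist-2}; the explicit formula (deferred to the appendix) is the record of this computation. The main obstacle I anticipate is not conceptual but bookkeeping: managing the five hyperbolic families against the polygonal subdivision without losing or double-counting a critical time, so that the count of non-analyticities is proved to be exactly $13$ rather than merely ``at most $13$''.
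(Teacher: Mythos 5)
Your proposal follows essentially the same pipeline as the paper: starting from the five winning vectors of Lemma~\ref{lem:7-gon_winners_at_points}, plot candidacy strips to partition $\Omega$, compute $F$ region by region from the sublevel sets of the return-time hyperbolae as in Section~\ref{sec:finding-pdf}, then differentiate and normalize, with the explicit integrations and bookkeeping of critical times carried out in Mathematica. The only notable addition the paper makes beyond your outline is an independent sanity check, integrating the return time over $\Omega$ to recover the known covolume $5\pi^2/14$ of the $(2,7,\infty)$ triangle group.
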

The points of non-analyticity for this distribution are provided in Table~\ref{fig:non-differentiability}.

\begin{table}
\centering
\setlength{\tabcolsep}{15pt}
\renewcommand{\arraystretch}{2.5}
% ^these two lines add space between the table's rows
\begin{tabular}{|c|c|} 
\hline
 Exact return time of non-analytic point & Decimal approximation \\
 \hline
 $t_1=\sin(\frac{\pi}{7})$ & 0.433884  
 \\
 $t_2=\cos(\frac{3\pi}{14})$ & 0.781831 
 \\
 $t_3=4\sec(\frac{3\pi}{14})\sin(\frac{\pi}{7})^2$ & 0.963149 
 \\
 $t_4=\cos(\frac{\pi}{14})$  & 0.974928 
 \\
 $t_5=\cos(\frac{\pi}{14})+\sin(\frac{\pi}{7})$ & 1.40881 
 \\
 $t_6=2\cos(\frac{\pi}{14})-\sin(\frac{\pi}{7})$ & 1.51597 
 \\
 $t_7=\frac{\cos(\frac{\pi}{14})}{1-2\sin(\frac{\pi}{14})}$ & 1.75676 
 \\
 $t_8=\frac{2\cos(\frac{3\pi}{14})^3}{\sin(\frac{\pi}{14})(3-4\sin(\frac{\pi}{14}))}$ & 2.03579 
 \\
 $t_9=8\sin(\frac{\pi}{7})\sin(\frac{3\pi}{14})$ & 2.16418 
 \\
 $t_{10}=\frac{\cos(\frac{\pi}{14})\cos(\frac{3\pi}{14})}{\cos(\frac{3\pi}{14})-\sin(\frac{\pi}{7})}$ & 2.19064 
 \\
 $t_{11}=\frac{4\cos^3(\frac{\pi}{14})\cos(\frac{3\pi}{14})}{5\cos(\frac{\pi}{14})-2\cos(\frac{3\pi}{14})-5\sin(\frac{\pi}{7})}$ & 2.53859 
 \\
 $t_{12}=4\cos(\frac{3\pi}{14})$ & 3.12733
 \\
 $t_{13}=\frac{\sin(\frac{\pi}{7})}{6-8\cos(\frac{\pi}{7})+6\sin(\frac{\pi}{14})}$ & 3.40636 
 \\
 \hline
\end{tabular}
\caption{Points of non-analyticity for the slope gap distribution of the double heptagon.}
\label{fig:non-differentiability}
\end{table}

\subsection{Volume Computation}

Recall that $\slr/\slxo$ can be realized as a suspension space over $\Omega$ with roof function given by the return time function, $\mathcal{R}$ (up to a set of measure zero). Thus integrating the return time function over $\Omega$ should yield the hyperbolic volume of the space $\slr/\slxo$. In the case of the double heptagon, which has Veech group isomorphic to the $(2,7,\infty)$ triangle group, this volume is known to be $\frac{5\pi^2}{14}$. Numerically integrating our return time function over each piece of the transversal in Mathematica yields a value of $3.524858714674771$, which agrees with the theoretical value $\frac{5\pi^2}{14}$ for as many significant figures as Mathematica displays.\footnote{Code for this calculation can be found in Appendix~\ref{sec:covolume-code}.} This provides independent evidence that our subdivision of the transversal and selection of winning vectors for this surface is correct.

\subsection{Directions for Future Work}

In this paper, we have provided an algorithm which reduces the problem of finding winning vectors for a Veech surface to finding the winning vector at a finite number of points along the top edge of its Poincar\'e section (or right edge, in the parametrization of \cite{KSW}). In Lemma~\ref{lem:region_with_winner}, we also provided a general method for finding the winning vector at a point, which in many cases reduces the problem to checking a finite number of holonomy vectors which could win. 

In Section~\ref{sec:dbl_heptagon}, we used this algorithm to compute the slope gap distribution of the double heptagon translation surface, and also demonstrated a method for finding the winning vector at a point where Lemma~\ref{lem:region_with_winner} does not produce a bounded region.  
It would be interesting to investigate whether there is an algorithmic method that will always reduce the problem of finding a winning vector at a point where Lemma~\ref{lem:region_with_winner} does not produce a bounded region to a calculation with a finite number of steps. 

Note that the double heptagon is one surface in a family of surfaces, the double $n$-gons, where $n$ is odd. The ``golden-L" surface, whose slope gap distribution was computed in \cite{ACL}, is another surface in this family, as it is $\slr$-equivalent to the regular double pentagon surface. Future work might compute the slope gap distribution for this entire family of surfaces, and provide bounds on the number of points of non-analyticity, as was done for the family of $2n$-gon surfaces in \cite{Previous_SUMRY}.

\bibliography{gapsbibliography}
\bibliographystyle{acm}

\newpage
\begin{appendix}

\section{Formula for the Slope Gap Distribution of the Double Heptagon}
\label{sec:formula-appendix}

For the sake of concision, we will give separate formulas for the cumulative distribution functions coming from each region of the transversal. 

We will start by defining some frequently appearing constants:

\begin{align*}
a_1&=\cos(\frac{\pi}{14}),\\
b_1&=\sin(\frac{\pi}{14}),\\
a_2&=\cos(\frac{\pi}{7}),\\
b_2&=\sin(\frac{\pi}{7}),\\
a_3&=\cos(\frac{3\pi}{14}),\\
b_3&=\sin(\frac{3\pi}{14}),
\end{align*}
as well as some auxiliary functions:
\begin{align*}
A_1(t)&=\sqrt{1-\frac{2-2b_3}{ta_3}},\\
A_2(t)&=\sqrt{1-\frac{2a_2-2b_1}{ta_1}},\\
A_3(t)&=\sqrt{1-\frac{4b_2}{t}},\\
A_4(t)&=\sqrt{1-\frac{8b_2b_3}{t}},\\
A_5(t)&=\sqrt{1-\frac{8a_2b_2}{t}}.
\end{align*}
We will also use the labeling of times as given in the table in Figure~\ref{fig:non-differentiability}. 

The (non-normalized) cumulative distribution functions for each respective subdivision of the transversal are given by
\begin{align*}
    F_1(t)&=
    \begin{cases}
        0 & t<t_2\\
        \frac{1}{t}\left(\ln\left(\frac{a_3}{t}\right)-1\right)+\frac{1}{a_3}& t_2\leq t<t_3\\
        \frac{1}{t}\left(\ln\left(\frac{a_3}{t}\right)-1\right)+\frac{1}{a_3}+\frac{2}{t}\ath(A_1(t))-\frac{a_3A_1(t)}{2b_2^2}& t_3\leq t<t_4\\
        \frac{1}{2a_3}\left(\frac{a_3}{b_2}-2\right)^2&t\geq t_4
    \end{cases}\\\\
    F_2(t)&=
    \begin{cases}
        0 & t<t_4\\
        \frac{1}{t}\left(\ln\left(\frac{a_1}{t}\right)-1\right)+\frac{1}{a_1}& t_4\leq t<t_5\\
        \frac{1}{t}\ln\left(\frac{16b_1^2b_3^2(2a_2-2b_1-1)}{(1-4b_1b_3)^2}\right)+\frac{2}{t}\ath(A_2(t))+\frac{162b_1b_3-12b_3-15+(2b_1b_3+6b_3-4)A_2(t)}{2b_2^2}& t_5\leq t<t_6\\
        4a_1\left(3+\frac{1}{1-6b_1}\right)&t\geq t_6
    \end{cases}\\\\
    F_3(t)&=
    \begin{cases}
        0 & t<t_4\\
        \frac{1}{t}\left(\ln\left(\frac{a_1}{t}\right)-1\right)+\frac{1}{a_1}& t_4\leq t<t_7\\
        \frac{1}{t}\ln(2b_1)+\frac{2}{t}\ath(A_3(t))-\frac{A_3(t)}{2 b_2}+\frac{11-16a_2+6b_3}{4a_1b_1(1-2a_2)^2}& t_7\leq t<t_8\\
        \frac{8b_1^3}{a_1(1-2a_2)^2}&t\geq t_8
    \end{cases}\\\\
    F_4(t)&=
    \begin{cases}
        0 & t<t_2\\
        \frac{1}{t}\left(\ln\left(\frac{a_3}{t}\right)-1\right)+\frac{1}{a_3}& t_2\leq t<t_9\\
        \frac{1}{t}\left(\ln\left(\frac{a_3}{t}\right)-1\right)+\frac{1}{a_3}+\frac{4}{t}\ath(A_4(t))+\frac{b_2-a_3}{b_2^2}A_4(t)& t_9\leq t<t_{10}\\
        \frac{1}{t}\ln(1-2a_2+2b_3)+\frac{2}{t}\ath(A_4(t))-\frac{A_4(t)}{4b_2b_3}+\frac{13+10b_1-24b_3}{32b_1^2b_2^3(1+2a_2)}& t_{10}\leq t<t_{11}\\
        \frac{6a_2-2b_3-4}{b_2}&t\geq t_{11}
    \end{cases}\\\\
    F_5(t)&=
    \begin{cases}
        0 & t<t_1\\
        \frac{1}{t}\left(\ln\left(\frac{b_2}{t}\right)-1\right)+\frac{1}{b_2}& t_1\leq t<t_{12}\\
        \frac{1}{t}\left(\ln\left(\frac{b_2}{t}\right)-1\right)+\frac{1}{b_2}+\frac{4}{t}\ath(A_5(t))-\frac{4b_1b_3}{b_2}A_5(t)& t_{12}\leq t<t_{13}\\
        \frac{1}{t}\left(\ln\left(\frac{a_3}{t}\right)-1\right)+\frac{2}{t}\ath(A_5(t))-\frac{A_5(t)}{4a_2b_2}+\frac{1}{2a_3}+\frac{2b_1+1}{a_1}&t\geq t_{13}
    \end{cases}\\
\end{align*}
Then the (normalized) cumulative distribution of the double heptagon is given by
\begin{align*}
F(t) = \frac{F_1(t)+F_2(t)+F_3(t)+F_4(t)+F_5(t)}{\cot(\frac{\pi}{7})}
\end{align*}
since the total area of the transversal is $\cot(\frac{\pi}{7})$. The probability density function representing the distribution of renormalized gaps between slopes of saddle connections on the double heptagon is then
\[
f(t)=F'(t).
\]

\section{Python Code for Finding Vectors in $L$}
\label{sec:Python-L}

\begin{verbatim}
# Searches bounded regions for holonomy vectors

# Side lengths of rectilinear surface
l1 = 1
l2 = 2*cos(pi/7)
l3 = 1/(2*sin(pi/14))

# Winning vectors
w1 = (2+3*cos(2*pi/7), sin(2*pi/7))
w2 = (4*cos(pi/7)+3*cos(3*pi/7), sin(3*pi/7))
w3 = (4*cos(pi/7)+cos(3*pi/7), sin(3*pi/7))
w4 = (2+cos(2*pi/7), sin(2*pi/7))
w5 = (cos(pi/7), sin(pi/7))

# Winning vectors' images on the rectilinear surface
(u1,v1) = (l3,l2)
(u2,v2) = (l3,l3)
(u3,v3) = (l2,l3)
(u4,v4) = (l1,l2)
(u5,v5) = (0,1)

# Points on A to find winners at
a1 = 2*cos(pi/7)/sin(pi/7) + (cos(pi/7) - 1)/sin(pi/7)  
# alpha = 2 cot(pi/7)  and  (x0,y0) = w5
a2 = (w1[0] - 1)/w1[1]
a3 = (w2[0] - 1)/w2[1]
a4 = (w3[0] - 1)/w3[1]
a5 = (w4[0] - 1)/w4[1]

# Finding the box containing the bounded triangle region
def max_x(point, candidate):
  return candidate[0]/(candidate[0] - point*candidate[1])

def max_y(point, candidate):
  return candidate[1]/(candidate[0] - point*candidate[1])

# testing - confirmed, matches graph on Desmos
# max_x(a1,w1), max_y(a1,w1)

# Checking if a given pair of coords is in the image of the bounded triangle 
# region under the "Original -> Rectilinear" shear
# Can input "rect = False" to instead check the original triangle region
def in_image_region(coords, point, candidate, rect=True):
  # coordinates (s,t) of the top point of the original triangle region...
  s = max_x(point,candidate)
  t = max_y(point,candidate)
  # ...or, top point of sheared triangle region
  if rect == True:
     (s,t) = shear_to_rect(s,t)

  # check if coords are shallower than the candidate
  condition1 = s*coords[1] - t*coords[0] <= 0
  # chcek if coords are in point's candidacy strip
  condition2 = coords[1] > t/(s-1)*(coords[0]-1)

  return condition1 and condition2

# Lists a superset of the holonomy vectors in the sheared image of the 
# bounded triangle region
def image_region_vector_list(point, candidate):
  # list to hold vectors in the region
  vects = []

  count = 0

  i = 0
  j = 0
  k = 0
  l = 0
  m = 0
  n = 0

  info = ((0,0,0,0,0,0),0.0)

  # box with opposite corners at (0,0) and (xmax, ymax) contains the triangle's
  # image
  xmax = max_x(point, candidate)  
  # the shear moves the triangle in the negative x direction, but keeps it in
  # the 1st quadrant, so this works
  ymax = 1/sin(pi/7)*max_y(point, candidate)  
  # the shear stretches the y axis by a factor of csc(pi/7)

  # Main loop: goes through integer combinations of l1,l2,l3 that fit in the
  # box, filtering for ones that lie inside the triangle's image.
  # The resulting list contains all holonomy vectors that beat "candidate" at
  # "point" (if there are any).
  # It also contains vectors that are not holonomy vectors; these need to be
  # sorted out by hand.
  while i*l1 < xmax:
    while j*l2 < xmax:
      while k*l3 < xmax:

        while l*l1 < ymax:
          while m*l2 < ymax:
            while n*l3 < ymax:

              x = i*l1 + j*l2 + k*l3
              y = l*l1 + m*l2 + n*l3

              if in_image_region((x,y), point, candidate) and x != 0:

                info = ((i,j,k,l,m,n),y/x)
                vects.append(info)

                count += 1

              n+=1
            n=0
            m+=1
          m=0
          l+=1
        l=0
        k+=1
      k=0
      j+=1
    j=0
    i+=1

  # sort the list by slope
  vects.sort(key=lambda x: x[1])

  return (count, vects)

image_region_vector_list(a1, w1)

image_region_vector_list(a2, w2)

image_region_vector_list(a3, w3)

image_region_vector_list(a4, w4)
\end{verbatim}

\section{Mathematica Code for the Volume Computation}
\label{sec:covolume-code}

\begin{verbatim}

(* Constants: *)
{s1, t1} = {2 + 3 Cos[2 Pi/7], Sin[2 Pi/7]}
{s2, t2} = {4 Cos[Pi/7] + 3 Cos[3 Pi/7], Sin[3 Pi/7]}
{s3, t3} = {4 Cos[Pi/7] + Cos[3 Pi/7], Sin[3 Pi/7]}
{s4, t4} = {2 + Cos[2 Pi/7], Sin[2 Pi/7]}
{s5, t5} = {Cos[Pi/7], Sin[Pi/7]}
{x0, y0} = {s5, t5}
Alpha = 2 Cot[Pi/7]

(* Edge equations, corner coordinates, and return time function for \
each region: *)

(* Region 1: *)

(* top edge: *) y == 1
(* right edge: *) x == (x0/y0 + Alpha) y - 1/y0
(* left edge: *)  x == (s1 y - 1)/t1
(* return time function: *) R1[x_, y_] := t1 /(y (s1 y - t1 x))
(* corners: *)
Bottom1 := Solve[
  (* left edge: *)  x == (s1 y - 1)/t1
   && 
   (* right edge: *) x == (x0/y0 + Alpha) y - 1/y0
  ,
  {x, y}]
Topright1 := Solve[
  (* top edge: *) y == 1
   && 
    (* right edge: *) x == (x0/y0 + Alpha) y - 1/y0
  ,
  {x, y}]
Topleft1 := Solve[
  (* top edge: *) y == 1
   && 
   (* left edge: *)  x == (s1 y - 1)/t1
  , 
  {x, y}]


(* Region 2: *)

(* top edge: *) y == 1
(* right edge: *) x == (s1  y - 1)/t1
(* left edge: *) x == (s2 y - 1)/t2
(* bottom edge: *) x == (x0/y0 + Alpha) y - 1/y0
(* return time function: *) R2[x_, y_] := t2 /(y (s2 y - t2 x))
(* corners: *)
Bottom2 := Solve[
  (* left edge: *) x == (s2 y - 1)/t2
   && 
   (* bottom edge: *) x == (x0/y0 + Alpha) y - 1/y0
  ,
  {x, y}]
Middle2 := Solve[
  (* right edge: *) x == (s1  y - 1)/t1
   && 
   (* bottom edge: *) x == (x0/y0 + Alpha) y - 1/y0
  ,
  {x, y}]
Topright2 := Solve[
  (* top edge: *) y == 1
   && 
    (* right edge: *) x == (s1  y - 1)/t1
  ,
  {x, y}]
Topleft2 := Solve[
  (* top edge: *) y == 1
   && 
   (* left edge: *) x == (s2 y - 1)/t2
  , 
  {x, y}]
Extracoord2 := Solve[
  (* same y as middle corner: *) y == (y /. First[Middle2])
   && 
    (* left edge: *) x == (s2 y - 1)/t2
  ,
  {x, y}]


(* Region 3: *)

(* top edge: *) y == 1
(* right edge: *) x == (s2  y - 1)/t2
(* left edge: *) x == (s3 y - 1)/t3
(* bottom edge: *) x == (x0/y0 + Alpha) y - 1/y0
(* return time function: *) R3[x_, y_] := t3 /(y (s3 y - t3 x))
(* corners: *)
Bottom3 := Solve[
  (* left edge: *) x == (s3 y - 1)/t3
   && 
   (* bottom edge: *) x == (x0/y0 + Alpha) y - 1/y0
  ,
  {x, y}]
Middle3 := Solve[
  (* right edge: *) x == (s2  y - 1)/t2
   && 
   (* bottom edge: *) x == (x0/y0 + Alpha) y - 1/y0
  ,
  {x, y}]
Topright3 := Solve[
  (* top edge: *) y == 1
   && 
    (* right edge: *) x == (s2  y - 1)/t2
  ,
  {x, y}]
Topleft3 := Solve[
  (* top edge: *) y == 1
   && 
   (* left edge: *) x == (s3 y - 1)/t3
  , 
  {x, y}]
Extracoord3 := Solve[
  (* same y as middle corner: *) y == (y /. First[Middle3])
   && 
    (* left edge: *) x == (s3 y - 1)/t3
  ,
  {x, y}]


(* Region 4: *)

(* top edge: *) y == 1
(* right edge: *) x == (s3  y - 1)/t3
(* left edge: *) x == (s4 y - 1)/t4
(* bottom edge: *) x == (x0/y0 + Alpha) y - 1/y0
(* return time function: *) R4[x_, y_] := t4 /(y (s4 y - t4 x))
(* corners: *)
Bottom4 := Solve[
  (* left edge: *) x == (s4 y - 1)/t4
   && 
   (* bottom edge: *) x == (x0/y0 + Alpha) y - 1/y0
  ,
  {x, y}]
Middle4 := Solve[
  (* right edge: *) x == (s3  y - 1)/t3
   && 
   (* bottom edge: *) x == (x0/y0 + Alpha) y - 1/y0
  ,
  {x, y}]
Topright4 := Solve[
  (* top edge: *) y == 1
   && 
    (* right edge: *) x == (s3  y - 1)/t3
  ,
  {x, y}]
Topleft4 := Solve[
  (* top edge: *) y == 1
   && 
   (* left edge: *) x == (s4 y - 1)/t4
  , 
  {x, y}]
Extracoord4 := Solve[
  (* same y as middle corner: *) y == (y /. First[Middle4])
   && 
    (* left edge: *) x == (s4 y - 1)/t4
  ,
  {x, y}]


(* Region 5: *)

(* top edge: *) y == 1
(* right edge: *) x == (s4  y - 1)/t4
(* left edge: *) x == (s5 y - 1)/t5
(* bottom edge: *) x == (x0/y0 + Alpha) y - 1/y0
(* return time function: *) R5[x_, y_] := t5/(y (s5 y - t5 x))
(* corners: *)
Bottom5 := Solve[
  (* left edge: *) x == (s5 y - 1)/t5
   && 
   (* bottom edge: *) x == (x0/y0 + Alpha) y - 1/y0
  ,
  {x, y}]
Middle5 := Solve[
  (* right edge: *) x == (s4  y - 1)/t4
   && 
   (* bottom edge: *) x == (x0/y0 + Alpha) y - 1/y0
  ,
  {x, y}]
Topright5 := Solve[
  (* top edge: *) y == 1
   && 
    (* right edge: *) x == (s4  y - 1)/t4
  ,
  {x, y}]
Topleft5 := Solve[
  (* top edge: *) y == 1
   && 
   (* left edge: *) x == (s5 y - 1)/t5
  , 
  {x, y}]
Extracoord5 := Solve[
  (* same y as middle corner: *) y == (y /. First[Middle5])
   && 
    (* left edge: *) x == (s5 y - 1)/t5
  ,
  {x, y}]

(* Integrate each region's return time function over the region, 
then sum those results *)

(* REGION 1 *)

R1volume = 
 Integrate[
  Integrate[
   R1[x, y], (* region 1 return time function *)
   {x, (s1  y - 1)/t1, (x0/y0 + Alpha) y - 1/y0}],(* x: 
  from left edge to right edge*)
  {y, y /. First[Bottom1], 1}] (* y: from bottom corner to top edge *)

(* REGION 2 *)

R2volume = 
 (* trapezoid *)
 Integrate[
   Integrate[
    R2[x, y],
    {x, (s2 y - 1)/t2, (s1 y - 1)/t1}],(* x: 
   from left edge to right edge *)
   {y, y /. First[Middle2], 1} (* y: from middle corner to top edge *)
   ] +
  (* triangle *)
  Integrate[
   Integrate[
    R2[x, y],
    {x, (s2 y - 1)/t2, (x0/y0 + Alpha) y - 1/y0}],(* x: 
   from left edge to bottom edge *)
   {y, y /. First[Bottom2], y /. First[Middle2]}]  (* y: 
  from bottom corner to middle corner *)

(* REGION 3 *)

R3volume = 
 (* trapezoid *)
 Integrate[
   Integrate[
    R3[x, y],
    {x, (s3 y - 1)/t3, (s2 y - 1)/t2}],(* x: 
   from left edge to right edge *)
   {y, y /. First[Middle3], 1}   (* y: 
   from middle corner to top edge *)
   ] +
  (* triangle *)
  Integrate[
   Integrate[
    R3[x, y],
    {x, (s3 y - 1)/t3, (x0/y0 + Alpha) y - 1/y0}],(* x: 
   from left edge to bottom edge *)
   {y, y /. First[Bottom3], 
    y /. First[Middle3]}]  (* y: from bottom corner to middle corner *)

(* REGION 4 *)

R4volume = 
 (* trapezoid *)
 Integrate[
   Integrate[
    R4[x, y],
    {x, (s4 y - 1)/t4, (s3 y - 1)/t3}],(* x: 
   from left edge to right edge *)
   {y, y /. First[Middle4], 1} (* y: from middle corner to top edge *)
   ] +
  (* triangle *)
  Integrate[
   Integrate[
    R4[x, y],
    {x, (s4 y - 1)/t4, (x0/y0 + Alpha) y - 1/y0}],(* x: 
   from left edge to bottom edge *)
   {y, y /. First[Bottom4], 
    y /. First[Middle4]}]  (* y: from bottom corner to middle corner *)

(* REGION 5 *)

R5volume = 
 (* trapezoid *)
 Integrate[
   Integrate[
    R5[x, y],
    {x, (s5 y - 1)/t5, (s4 y - 1)/t4}],(* x: 
   from left edge to right edge *)
   {y, y /. First[Middle5], 1}  (* y: from middle corner to top edge *)
   ] +
  (* triangle *)
  Integrate[
   Integrate[
    R5[x, y],
    {x, (s5 y - 1)/t5, (x0/y0 + Alpha) y - 1/y0}],(* x: 
   from left edge to bottom edge *)
   {y, y /. First[Bottom5], y /. First[Middle5]}]  (* y: 
  from bottom corner to middle corner *)

N[R1volume + R2volume + R3volume + R4volume + R5volume]

N[R1volume + R2volume + R3volume + R4volume + R5volume - 5 Pi^2/14]

Simplify[R1volume + R2volume + R3volume + R4volume + R5volume]
\end{verbatim}

\vspace{1em}

\end{appendix}

\end{document}